\numberwithin{equation}{section} 
\newcommand{\ee}{\mathbb{E}}
\newcommand{\rr}{\mathbb{R}}
\newcommand{\pp}{\mathbb{P}}
\newcommand{\qq}{\mathbb{Q}}
\def\HH{\mathbb H}
\def\VV{\mathbb V}
\def\Lip{{\mathrm{{\rm Lip}}}}
\def\HS{\rm HS}
\begin{document}

 \PageNum{1}
 \Volume{201x}{Sep.}{x}{x}
 \OnlineTime{August 15, 201x}
 \DOI{0000000000000000}
 \EditorNote{Received February 20, 2019, accepted October 23, 2019}

\abovedisplayskip 6pt plus 2pt minus 2pt \belowdisplayskip 6pt
plus 2pt minus 2pt
\def\vsp{\vspace{1mm}}
\def\th#1{\vspace{1mm}\noindent{\bf #1}\quad}
\def\proof{\vspace{1mm}\noindent{\it Proof}\quad}
\def\no{\nonumber}
\newenvironment{prof}[1][Proof]{\noindent\textit{#1}\quad }
{\hfill $\Box$\vspace{0.7mm}}
\def\q{\quad} \def\qq{\qquad}
\allowdisplaybreaks[4]


\AuthorMark{Ma Y. T.  and Wang R.}                             

\TitleMark{Transport inequality of SPDE  with L\'evy noise}  

\title{Transportation cost inequalities for stochastic reaction-diffusion equations  with L\'evy noises  and non-Lipschitz reaction terms        
\footnote{The first author is supported by National  Natural Science Foundation of China (11571043, 11431014, 11871008); the second author is supported by National  Natural Science Foundation of China (11871382, 11671076). }}                  

\author{Yu Tao \uppercase{MA}}             
    { School of Mathematical Sciences $\&$ Lab. Math. Com. Sys., Beijing Normal University, Beijing 100875, P. R. China\\
    E-mail\,$:$ mayt@bnu.edu.cn }

\author{Ran \uppercase{WANG}$^{1)}$\footnote{1) Corresponding author} }     
    { School of Mathematics and Statistics, Wuhan University, Wuhan 430072, P. R. China\\
    E-mail\,$:$  rwang@whu.edu.cn }

\maketitle%

\Abstract{ For  stochastic reaction-diffusion equations  with
L\'evy noises  and non-Lipschitz reaction terms, we prove that $W_1H$ transportation cost inequalities hold for their
invariant probability measures and for their process-level laws on the path space with respect to the $L^1$-metric. The proofs are based on the Galerkin approximations.}      

\Keywords{ Stochastic reaction-diffusion equation;  Poisson random measure;
Transportation cost inequality}        

\MRSubClass{28A35, 60E15}      

\section{Introduction}

  Let $(E, d)$ be a metric space equipped with $\sigma$-field $\mathcal B$ such that $d(\cdot,
\cdot)$ is $\mathcal{B}\times\mathcal{B}$-measurable. Given $p\ge 1$
and two probability measures $\mu$ and $\nu$ on $E,$  define the $L^p$-Wasserstein distance between
$\mu$ and $\nu$:
$$W_{p, d}(\mu, \nu)=\inf\bigg(\int\int_{E\times E} d(x, y)^p\pi(dx, dy)\bigg)^{1/p},$$
where the infimum is taken over all probability measures $\pi$ on
the product space $E\times E$ with marginal distribution $\mu$ and
$\nu$. The  relative entropy of $\nu$ with respect
to $\mu$ is defined by \begin{equation}\label{kullback} {\mathbf H}(\nu|\mu)=\begin{cases}
\int_E\log\frac{d\nu}{d\mu}d\nu, & \text{if } \nu<\!\!<\mu; \\
+\infty, & \text{otherwise}.
\end{cases}
\end{equation}

We say that the probability $\mu$ satisfies a $W_pH$ transportation cost-information inequality on $(E, d)$ if there exists  a constant
$C>0$ such that for any probability measure $\nu\in \mathcal M_1(E)$ (the space of all probability measures on $E$),
 \begin{equation}\label{WpH}
W_{p, d} (\mu, \nu)\le \sqrt{ 2C{\mathbf H}(\nu|\mu)}.
  \end{equation}

Let $\alpha:[0,\infty)\rightarrow[0,\infty]$ be a non-decreasing    left-continuous convex function, with $f(0)=0$.     $\mu$ is said to satisfy the $\alpha-W_pH$  if for all probability measure $\nu$ on $E$,
 \begin{equation}\label{W1H0}
\alpha\big(W_{p, d}(\mu, \nu)\big)\le {\mathbf H}(\nu|\mu).
 \end{equation}
  The inequality \eqref{WpH} is a particular case of \eqref{W1H0}
with $\alpha(t)=t^{2/p}/(2C)$ for any $t\ge0$.

The properties $W_pH, p=1,2$ are of particular interest. They  have been brought into relation with the phenomenon of measure concentration, functional inequalities, Hamilton-Jacobi's equation, optimal transport problem,  large deviations, see, e.g.,  \cite{BGL2001, BGL, BG,  DGW, FS, Goz, GL, Ledoux, Tal, Villani,  WangFY} and references therein. For example, we give   Gozlan-L\'eonard's characterization for $\alpha-W_1H$ transportation cost inequality.

\begin{theorem}\label{GL} (Gozlan-L\'eonard \cite{GL}) Let $\alpha: [0,\infty)\to
[0, \infty]$ be a non-decreasing left continuous convex function
with $\alpha(0)=0.$ The following properties are equivalent:

$(i).$ The $\alpha-W_1H$ inequality below holds
$$\alpha(W_{1, d}(\nu, \mu))\le H(\nu|\mu), \quad \forall \nu\in \mathcal M_1(E);$$

$(ii).$ For every $f: (E, d)\to\rr$ bounded and Lipschitzian with
$\|f\|_{\Lip}\le 1,$ \begin{equation}\label{equi} \int_E
e^{\lambda(f-\mu(f))}d\mu\le e^{\alpha^{*}(\lambda)}, \quad
\lambda>0, \end{equation} where $\alpha^*(\lambda):=\sup_{r\ge
0}(r\lambda-\alpha(r))$ is the semi-Legendre transformation;

$(iii).$ Let $(\xi_k)_{k\ge 1}$ be a sequence of independent and identically distributed  random variables  taking
values in $E$ of common law $\mu$.  For every $f: E\to\rr$ with
$\|f\|_{\Lip}\le 1,$
$$\pp\bigg(\frac{1}{n}\sum_{k=1}^nf(\xi_k)-\mu(f)>r\bigg)\le e^{-n\alpha(r)}, \; r>0, \; n\ge 1. $$
 \end{theorem}
The equivalence of (i) and (ii) is a generalization of
Bobkov-G\"{o}tze's criterion \cite{BG} for quadratic $\alpha$, and (iii) gives a probability meaning to the $\alpha-W_1H$ inequality.

The $W_2H$ inequalities  on the path spaces of stochastic (partial) differential equations  driven by Gaussian noises have been investigated by many authors, for example,  see \cite{BWY, DGW, FU, P} for stochastic differential equations (SDEs) and  \cite{BH, SZ, WZ2006} for stochastic partial differential equations (SPDEs).

The $\alpha-W_1H$ inequalities  on the path spaces of SDEs  with jumps have also been investigated, see   \cite{Wunew}   for SDEs driven by pure jump processes,   \cite{Ma} for SDEs driven by both Gaussian and jump
noises,  and  \cite{SY} for  regime-switching diffusion processes.

The transportation inequalities for non-globally dissipative SDEs with jumps were studied  in M. Majka \cite{Majka}, by using the mirror coupling for the jump part and the reflection coupling   for the Brownian part, for  bounding Malliavin derivatives of solutions of SDEs with both jump and Gaussian noise. We would also like to mention the works of \cite{LW} and  \cite{Majka2017} for  the   exponential convergence with respect to the $L^1$-Wasserstein distance when the drift  is dissipative outside a compact set.

The  aim of this paper is to prove that  the $\alpha-W_1H$ transportation cost inequalities hold for stochastic reaction-diffusion equations driven by both Gaussian and L\'evy noises under the $L^1$ distance in the path space. The reaction term can be chosen to be Lipschitz continuous, or to be a polynomial, for example $f(x)=-x^3+C_1x$ for some $C_1\in \mathbb R$.  The main ingredient in our study is
  the finite dimensional approximations,  which is  more or less standard in the
literature for the Lipschitz case, but it is  difficult in  the non-Lipschitz case.

This paper is organized as follows. In Section 2, we present the framework for the stochastic reaction-diffusion equations with jumps and with Lipschitz reaction terms, and then prove the transportation cost inequalities. In Section 3,    we first establish some tightness results for approximating processes of the system with non-Lipschitz reaction terms, and then  prove  the transportation cost inequalities.

\section{Transportation cost inequalities for SPDE  with L\'evy noise  and  Lipschitz  reaction term }

\subsection{SPDE  with L\'evy noise  and  Lipschitz  reaction term }

Let $\HH: = L^2(0,1)$ be the space of square integrable real-valued functions on   $[0,1]$.
The norm and the inner product on $\HH$ are
denoted by $\|\cdot\|_{\HH}$ and $\langle\cdot,\cdot\rangle_{\HH}$, respectively. Let $\HH^k(0,1)$ be the Sobolev space of all functions in $\HH$ whose derivatives up to order $k$  also belong  to $\HH$.  $\HH^1_0(0,1)$ is the subspace of $\HH^1(0,1)$ of all functions whose values at $0$ and $1$ vanish.
Let $\Delta$  be the Laplace operator on $\HH$:
\begin{align*}
\Delta x:= \frac{\partial^2}{\partial \xi^2}x(\xi),
\quad  x\in \HH^2(0,1) \cap \HH^1_0(0,1).
\end{align*}
It is well known that $\Delta$ is the infinitesimal generator of a strongly continuous semigroup
$S(t):=e^{t\Delta}, t\geq0$.  $\{e_k(\xi):=\sqrt{2}\sin(k\pi\xi)\}_{k\geq 1}$ is an orthonormal  basis of $\HH$ consisting of the eigenvectors of $\Delta$, i.e.,
$$\Delta e_k=-\lambda_k e_k\ \ \ \ {\rm with} \ \ \lambda_k=k^2\pi^2.$$

For any $\theta\in\rr$,  let
 $$ \HH_{\theta}:=\left\{ x=\sum_{k \ge1} x_k e_k: (x_k)_{k\ge 1}\in \rr, \sum_{k\ge1} \lambda_k^{\theta} |x_k|^2<\infty\right\},$$
endowed with norm
$$
\ \|x\|_{\HH_\theta}:=\left(\sum_{k\ge1} \lambda_k^{\theta} |x_k|^2\right)^{1/2}.
$$
Then, for any $\theta>0$,  $\HH_{\theta}$ is densely and compactly embedded in $\HH$. Particularly,  denote $\VV:=\HH_{1}=\HH^1_0(0, 1)$, whose dual space is $\VV^{*}=\HH_{-1}$. The norm and the inner product on $\VV$ are
denoted by $\|\cdot\|_{\VV}$ and $\langle \cdot, \cdot\rangle_{\VV}$, respectively. If $_{\VV^*}\langle \cdot, \cdot\rangle_{\VV}$ denotes the duality between $\VV$ and its dual space $\VV^*$,  we have
$$
_{\VV^*}\langle u, v\rangle_{\VV}=\langle u, v\rangle_{\HH}, \ \ \  \text{for any } u\in \HH, v\in \VV.
$$

Let $(\Omega, \mathcal F, (\mathcal F_t)_{t\ge0}, \mathbb P)$ be a filtered probability space, $(\mathbb X, \mathcal B(\mathbb X))$ be  a measurable space, and $\vartheta$  a $\sigma$-finite measure on it.   Let $N(dt,du)$ be a Poisson random measure on $\mathbb R_+\times \mathbb X$ with intensity measure $dt\vartheta(du)$,   $\widetilde{N}(dt, du)=N(dt,du)-dt\vartheta(du)$
 the  compensated Poisson random measure,   and $(\beta^k)_{k\ge1}$  a sequence of  independent and identically distributed  one dimensional standard  Brownian motions on the probability space $(\Omega, \mathcal F, (\mathcal F_t)_{t\ge0}, \mathbb P)$. Then  $\beta_t:=\sum_{k\ge1}\beta^k_t e_k $ is  an $\HH$-cylindrical Brownian motion.

 Consider the  following  SPDE on the Hilbert space $\HH$:
\begin{eqnarray}\label{SPDE}
\left\{
 \begin{array}{lll}
 & dX_t=\Delta X_tdt+f(X_t)dt+\sigma(X_t)d\beta_t+\int_{\mathbb X} G(X_{t-},v)\widetilde{N}(dt,dv); \\
 &X_0=x\in \HH,\\
 \end{array}
\right.
\end{eqnarray}
where $x$ is   $\mathcal F_0$-measurable.
 The coefficients $f: \HH\rightarrow \HH$, $\sigma: \HH\rightarrow \mathcal L_2(\HH; \HH)$ (the space of all Hilbert-Schmidt operators from $\HH$ to $\HH$), $G:\HH\times \mathbb X\rightarrow \HH$  are   Fr\'echet continuously differentiable, and they satisfy the following conditions:

\begin{itemize}
  \item[(H1).] The reaction term  $f$ is Lipschitz  continuous, i.e., there exists a positive constant $C_{f}>0$ such that
  $$\|f(x)-f(y)\|_{\HH}\le C_{f} \|x-y\|_{\HH}, \ \ \ \forall x,y\in \HH.$$
  \item[(H2).] $\sigma$ is Lipschitz  continuous, i.e., there exists a positive constant $C_{\sigma}>0$ such that
  $$\|\sigma(x)-\sigma(y)\|_{\HS}\le C_{\sigma} \|x-y\|_{\HH}, \ \ \ \forall x,y\in \HH.$$
  \item[(H3).] $G$ satisfy the following conditions:
  \begin{align}
&\int_{\mathbb X}\|G(x, v)-G(y,v)\|_{\HH}^2\vartheta(dv)\le C_G \|x-y\|_{\HH}^2;\\
&\int_{\mathbb X}\|G(x, v)\|_{\HH}^2\vartheta(dv)\le C_G(1+\|x\|_{\HH}^2).
  \end{align}
\end{itemize}

Let $\mathbb{D}([0, T]; \mathbb H)$ be the space  of all
right continuous with left limits   $\mathbb H$-valued functions on $[0, T]$, endowed with the Skorokhod topology. Recall the following results about equation \eqref{SPDE} from \cite[Theorem 3.3]{RZ} and  \cite[Lemma 3.13]{YZZ}.

\begin{theorem}\cite{RZ, YZZ} Under Conditions (H1)-(H3), for any $x\in L^2(\Omega;\HH)$, there exists a unique $\HH$-valued progressively measurable process $\{X_t\}_{t\in[0,T]}\in \mathbb D([0,T];\HH) \cap L^2((0,T];\VV)$ for any $T>0$, and for any $\phi\in \VV$, it holds that a.s.,
\begin{align}\label{eq: solution-integ}
\langle X_t, \phi\rangle_{\HH}=&\langle x, \phi\rangle_{\HH}+\int_0^t{_{\VV^*}}\langle \Delta X_s,\phi\rangle_{\VV} ds
+\left\langle \int_0^t \sigma(X_s)d\beta_s,\phi\right\rangle_{\HH} \notag\\&+\int_0^t\int_{\mathbb X}\langle G(X_{s-}, v),\phi\rangle_{\HH} \widetilde N(ds,dv).
\end{align}
Furthermore, we have
\begin{align}\label{eq: solution-integ2}
\mathbb E\left[\sup_{0\le t\le T}\|X_t\|_{\HH}^2+ \int_0^T\|X_t\|_{\VV}^2dt\right]<\infty.
\end{align}
\end{theorem}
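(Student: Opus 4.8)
\begin{prof}[Proof sketch]
The plan is to follow the variational (Galerkin) approach on the Gelfand triple $\VV\subset\HH\subset\VV^*$, since hypotheses (H1)--(H3) supply exactly the coercivity, monotonicity and growth estimates that this method requires. First I would record the structural facts. The Laplacian satisfies ${_{\VV^*}}\langle\Delta x,x\rangle_\VV=-\|x\|_\VV^2$ for $x\in\VV$, which furnishes coercivity, while (H1)--(H3) yield, for all $x,y\in\VV$, the monotonicity bound
\begin{align*}
&2\,{_{\VV^*}}\langle\Delta x-\Delta y,x-y\rangle_\VV+2\langle f(x)-f(y),x-y\rangle_\HH\\
&\quad+\|\sigma(x)-\sigma(y)\|_{\HS}^2+\int_{\mathbb X}\|G(x,v)-G(y,v)\|_\HH^2\,\vartheta(dv)\le C\|x-y\|_\HH^2,
\end{align*}
together with the matching growth estimate that controls $2\,{_{\VV^*}}\langle\Delta x,x\rangle_\VV+2\langle f(x),x\rangle_\HH+\|\sigma(x)\|_{\HS}^2+\int_{\mathbb X}\|G(x,v)\|_\HH^2\vartheta(dv)$ from above by $-\|x\|_\VV^2+C(1+\|x\|_\HH^2)$.

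Next I would set up the finite-dimensional approximation. Writing $H_n:=\mathrm{span}\{e_1,\dots,e_n\}$ with orthogonal projection $\Pi_n$, I consider the $H_n$-valued equation
\begin{align*}
dX^n_t=\Pi_n\Delta X^n_t\,dt+\Pi_n f(X^n_t)\,dt+\Pi_n\sigma(X^n_t)\,d\beta_t+\int_{\mathbb X}\Pi_n G(X^n_{t-},v)\,\widetilde N(dt,dv),
\end{align*}
with $X^n_0=\Pi_n x$. Since the projected coefficients are globally Lipschitz on the finite-dimensional space $H_n$, this SDE with jumps admits a unique c\`adl\`ag strong solution by standard theory. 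Applying It\^o's formula to $\|X^n_t\|_\HH^2$, using the coercivity estimate above, the Burkholder--Davis--Gundy inequality for both the Gaussian and the compensated-Poisson stochastic integrals, and Gronwall's lemma, I obtain the a priori bound
\begin{align*}
\sup_{n\ge1}\mathbb E\Big[\sup_{0\le t\le T}\|X^n_t\|_\HH^2+\int_0^T\|X^n_t\|_\VV^2\,dt\Big]<\infty.
\end{align*}

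With these uniform bounds, $\{X^n\}$ is bounded in $L^2(\Omega\times[0,T];\VV)$ and the sequences $\{\Delta X^n\}$, $\{f(X^n)\}$, $\{\sigma(X^n)\}$, $\{G(X^n,\cdot)\}$ are bounded in the corresponding $L^2$-spaces; by weak compactness I would pass to a subsequence along which $X^n$ converges weakly to some $X$ and each nonlinear term converges weakly to a limit. Letting $n\to\infty$ in the Galerkin identity then shows that $X$ solves \eqref{eq: solution-integ} with these weak limits in the place of $\Delta X$, $f(X)$, $\sigma(X)$ and $G(X,\cdot)$. Identifying the limits with the genuine nonlinearities is the crux of the proof and is carried out by the monotonicity (Minty--Browder) trick: one compares $\mathbb E\|X^n_t\|_\HH^2$ with $\mathbb E\|X_t\|_\HH^2$ by It\^o's formula, then exploits the monotonicity inequality together with the weak lower semicontinuity of the norms. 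Uniqueness follows by applying It\^o's formula to $\|X_t-Y_t\|_\HH^2$ for two solutions and invoking the same monotonicity bound and Gronwall's lemma, while the energy estimate \eqref{eq: solution-integ2} is inherited from the uniform a priori bound by lower semicontinuity. I expect the main obstacle to be precisely this limit identification in the jump setting: the It\^o formula for the squared $\HH$-norm of a variational solution driven simultaneously by cylindrical Brownian and compensated Poisson noise must be justified, and the jump (quadratic-variation) terms must be controlled carefully in the monotonicity comparison.
\end{prof}
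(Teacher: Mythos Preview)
The paper does not supply its own proof of this theorem: the statement is quoted verbatim from R\"ockner--Zhang \cite{RZ} (existence, uniqueness, and the weak formulation \eqref{eq: solution-integ}) and Yang--Zhai--Zhang \cite{YZZ} (the moment bound \eqref{eq: solution-integ2}), and is used as a black box in the rest of the argument. So there is no ``paper's proof'' to compare against.

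That said, your sketch is a faithful outline of the variational method those references employ. The Gelfand triple $\VV\subset\HH\subset\VV^*$, the coercivity coming from ${_{\VV^*}}\langle\Delta x,x\rangle_\VV=-\|x\|_\VV^2$, the monotonicity and growth bounds read off from (H1)--(H3), the Galerkin approximation on $H_n$, the uniform a priori estimate via It\^o's formula plus BDG and Gronwall, weak compactness, and the Minty--Browder identification of the nonlinear limits---this is exactly the machinery of \cite{RZ} (and, in the more general locally monotone setting with jumps, of \cite{BLZ}). Two small comments: first, the c\`adl\`ag regularity $X\in\mathbb D([0,T];\HH)$ does not fall out of weak compactness alone and requires an additional argument (an It\^o formula for $\|X_t\|_\HH^2$ at the level of the limiting variational solution, as in \cite[Lemma 3.13]{YZZ} or \cite[Theorem 4.1]{BLZ}); second, your remark that the It\^o formula with simultaneous cylindrical Brownian and compensated Poisson noise is the delicate point is well placed---this is precisely where the cited references do the technical work.
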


 \begin{remark}
 Recall that $\lambda_1$ is the first eigenvalue of $-\Delta$.
Let
\begin{equation}\label{eq L0}
K:=2\lambda_1-(2 C_{f} +C_{\sigma}^2+C_G).
\end{equation}
   By  (H1)-(H3), we know that   for all $x_1, x_2\in \HH$,
  \begin{align}\label{eq:diss}
  &2\left\langle x_1-x_2, \Delta (x_1-x_2)\right \rangle_{\HH}+2\left\langle x_1-x_2, f(x_1)-f(x_2)\right \rangle_{\HH} +\left\|\sigma(x_1)-\sigma(x_2)\right\|_{\HS}^2\notag\\
  &\ \ \ +\int_{\mathbb X}\left\|G(x_1, v)-G(x_2, v)\right\|_{\HH}^2\vartheta(dv)\notag\\
 \le& -K  \|x_1-x_2\|_{\HH}^2.
  \end{align}
When $K>0$, \eqref{eq:diss} is the globally dissipative condition, which is used to guarantee the existence of invariant measure and further to obtain the transportation cost inequality for this invariant measure. For example,  under the globally dissipative condition, the following results hold:  the Markov process $\{X_t\}_{t\ge0}$ admits an invariant measure (e.g. see \cite[Chapter 16]{PZ}); for the SPDE driven by additive Gaussian noise (i.e., $G=0$ and $\sigma$ is a constant matrix),  Da Prato {\it et al.} \cite{DDG} obtained the
log-Sobolev inequality for the invariant measure of $\{X_t\}_{t\ge0}$, and  Zhang and Wu \cite{WZ2006} obtained the log-Sobolev inequalities for  the   process-level law on the continuous path  space with respect to the $L^2$-metric, which are stronger than the transportation cost inequality $W_2H$ by \cite{OV};
 for finite dimensional  stochastic differential
equations with jumps, the  transportation cost inequalities $W_1H$ were obtained for  their
invariant probability measure as well as for their process-level law on the right
continuous paths space with respect to the $L^1$-metric, see \cite{Wunew} and \cite{Ma}.
  \end{remark}

\subsection{Transportation cost inequalities for SPDE  with L\'evy noise  and Lipschitz reaction term }

 Recall the following result, which tells that the $W_pH$-inequality is stable under the weak convergence.
\begin{lemma}\label{lem:stability:weak}\cite[Lemma 2.2]{DGW} Let $(E,d)$ be a metric, separable and complete space and $(\mu_n, \mu)_{n\in\mathbb N}$ a family of probability measures on $E$. Assume that $\mu_n\in W_pH(C)$ for all $n\in \mathbb N$ and $\mu_n\rightarrow \mu$ weakly. Then $\mu\in W_pH(C)$.

\end{lemma}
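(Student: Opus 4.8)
The plan is to avoid working directly with the relative entropy of $\nu$ against the approximating measures $\mu_n$ and instead to pass to the weak limit in the dual (Laplace transform) formulation of the inequality, where only bounded continuous test functions appear. I will spell this out for $p=1$, which is the case actually used in the sequel; the general case is handled by the same device applied to the corresponding dual characterization.

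First I would record the dual form of the hypothesis. Since each $\mu_n\in W_1H(C)$, the assumed inequality $W_{1,d}(\mu_n,\nu)\le\sqrt{2C\mathbf{H}(\nu|\mu_n)}$ is exactly the $\alpha-W_1H$ inequality with $\alpha(t)=t^2/(2C)$, a non-decreasing left-continuous convex function vanishing at $0$, whose Legendre transform is $\alpha^{*}(\lambda)=C\lambda^2/2$ for $\lambda>0$. By the equivalence $(i)\Leftrightarrow(ii)$ of Theorem \ref{GL}, this is equivalent to saying that for every bounded $f$ with $\|f\|_{\Lip}\le 1$ and every $\lambda>0$,
$$\int_E e^{\lambda(f-\mu_n(f))}\,d\mu_n\le e^{C\lambda^2/2}.$$

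Next I would pass to the weak limit. Fix such an $f$ and a $\lambda>0$, and rewrite the bound as $\int_E e^{\lambda f}\,d\mu_n\le e^{\lambda\mu_n(f)}\,e^{C\lambda^2/2}$. Because $f$ is bounded and continuous, so is $e^{\lambda f}$, and weak convergence $\mu_n\to\mu$ gives both $\mu_n(f)\to\mu(f)$ and $\int e^{\lambda f}\,d\mu_n\to\int e^{\lambda f}\,d\mu$; letting $n\to\infty$ yields $\int_E e^{\lambda(f-\mu(f))}\,d\mu\le e^{C\lambda^2/2}$ for all admissible $f$ and $\lambda>0$. Thus $\mu$ satisfies condition $(ii)$ of Theorem \ref{GL}, and the reverse implication $(ii)\Rightarrow(i)$ returns $\alpha\big(W_{1,d}(\mu,\nu)\big)\le\mathbf{H}(\nu|\mu)$, i.e. $\mu\in W_1H(C)$.

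The main obstacle, and the reason a direct argument fails, is the entropy term. The functional $\mathbf{H}(\nu|\cdot)$ is only lower semicontinuous under weak convergence, which is the wrong direction for bounding $\liminf_n\mathbf{H}(\nu|\mu_n)$ from above by $\mathbf{H}(\nu|\mu)$; worse still, $\nu$ need not be absolutely continuous with respect to each $\mu_n$, so every $\mathbf{H}(\nu|\mu_n)$ may be $+\infty$ while $\mathbf{H}(\nu|\mu)$ is finite, making the hypothesis vacuous along the sequence. Dualizing removes this difficulty entirely, since the Laplace-transform inequality involves only the bounded, continuous quantities $\int e^{\lambda f}\,d\mu_n$ and $\mu_n(f)$, for which weak convergence is precisely the right tool; in particular lower semicontinuity of $W_{1,d}$ is never invoked. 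For $p\ne 1$ the same scheme goes through once Theorem \ref{GL} is replaced by the appropriate inf-convolution/Hamilton-Jacobi dual characterization of $W_pH$, or one may simply appeal to \cite{DGW}.
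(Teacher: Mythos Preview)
The paper does not supply its own proof of this lemma; it simply quotes \cite[Lemma~2.2]{DGW} and uses the result as a black box. Your argument via the Bobkov--G\"otze/Gozlan--L\'eonard dual characterization (Theorem~\ref{GL}) is correct for $p=1$, and it is in fact essentially the proof given in \cite{DGW}: there too one passes to the Laplace-transform side, where only bounded continuous integrands appear, so weak convergence applies directly. Your diagnosis of why a direct entropy argument fails (wrong direction of semicontinuity, possible $\mathbf H(\nu|\mu_n)=+\infty$ along the sequence) is on point. For $p=2$ the same scheme works with the inf-convolution dual form of Bobkov--Gentil--Ledoux, as you indicate; since the present paper only invokes the case $p=1$, your write-up is sufficient.
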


The first named author \cite{Ma} proved the  transportation cost inequalities for SDE  with L\'evy noises under the globally dissipative condition. Now, we use the finite dimensional approximation's technique and Lemma \ref{lem:stability:weak} to prove the  transportation cost inequalities for SPDE \eqref{SPDE}.

\begin{theorem}\label{main1} Assume  Conditions  (H1)-(H3) hold, $K>0$ in \eqref{eq L0},   $\|\sigma(x)\|_{\rm HS}\le
\bar\sigma$ for any $x\in \HH$ and there is some
Borel-measurable function $\bar G(u)$ on $\mathbb X$
such that $|G(x, v)|\le\bar G(v)$ for all  $x\in \HH, v\in \mathbb X$ and
\begin{equation}\label{condi} \exists \lambda>0: \quad
\Lambda(\lambda):=\int_{\mathbb X}\left(e^{\lambda \bar G(v)}-\lambda\bar G(v)-1\right)\vartheta(dv)<\infty. \end{equation}
The following properties hold
\begin{itemize}
\item[(1)]  $\{X_t\}_{t\ge 0}$ admits a unique invariant probability
measure $\mu,$ and for any $p\in [1, 2], t>0, \nu\in \mathcal M_1(\HH)$,
 \begin{equation}\label{contractive}
W_{p, d}(\nu P_t, \mu)\le e^{-K t}W_{p, d}(\nu, \mu),
 \end{equation}
  where $d(x, y)=\|x-y\|_{\HH}$.
\item[(2)] For each $T>0, x\in \HH$,  the Markov transition probability   $P_{T}(x, dy)$ satisfies the following
$\alpha-W_1H$ transportation inequality:
 \begin{equation}\label{W1H} \alpha_T(W_{1,
d}(\nu, P_T(x, dy)))\le {\bf H}(\nu| P_T(x, dy)), \quad\forall \nu\in
\mathcal M_1(\HH), \end{equation}
where
\begin{align*}\alpha_T(r):=&\sup_{\lambda>0}\left\{r\lambda-\int_{0}^{T}\Lambda\left(e^{-K t}\lambda\right)dt-\frac{\bar\sigma^2
\lambda^2}{4K}\left(1-e^{-2K T}\right)\right\}\\
\ge& \frac1{K}
\gamma^{\ast}_{1/2}(K r),
\end{align*}
with
$\gamma_a(\lambda):=\Lambda(\lambda)+a\bar\sigma^2\lambda^2/
2$ and $\gamma^{\ast}_{a}(r):=\sup_{\lambda\ge0}\big(r\lambda-\gamma_a (\lambda)\big), \, r\ge 0$. In particular, for
the invariant probability measure $\mu,$ \begin{equation}\label{spe}
\frac{1}{K}\gamma^{\ast}_{1/2} \big(K W_{1, d}(\nu,
\mu)\big)\le\alpha_{\infty}\big(W_{1, d}(\nu, \mu)\big)\le
{\mathbf H}(\nu|\mu), \end{equation}
for all $\nu\in \mathcal M_1(\HH)$.
 \item[(3)] For each $T>0$, the law $\pp_{x}$ of $X_{[0,
T]},$ the solution of \eqref{SPDE} with   $X_0=x$ being a fixed point in $\HH$, satisfies the $W_1H$ on the
space $\mathbb{D}([0, T]; \HH)$,
\begin{equation}\label{L1}\alpha_T^P\big(W_{1, d_{L^1}}(\mathbb{Q}, \pp_x)\big)\le {\mathbf H}(\mathbb{Q}|\pp_{x}), \quad \forall
\mathbb{Q}\in \mathcal M_1\big(\mathbb{D}([0, T]; \HH)\big)\end{equation} and
\begin{align}\label{L1bound}\alpha^P_T(r):=&\sup_{\lambda>0}\bigg(\lambda
r-\int_0^T\Lambda(\eta(t)\lambda)
dt-\frac{\bar \sigma^2\lambda^2}{2}\int_0^T\eta^2(t)dt\bigg)\notag\\
\ge &
T\gamma^{\ast}_1(r K/T), \end{align} where $\eta(t):=(1-e^{-Kt})/K$  and
$d_{L^1}(\gamma_1,\gamma_2)=\int_0^T\|\gamma_1(t)-\gamma_2(t)\|_{\HH}dt$  for any $\gamma_1, \gamma_2\in \mathbb D([0,T];\mathbb H)$.
\end{itemize}\end{theorem}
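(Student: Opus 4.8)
\noindent\emph{Strategy of proof.}\quad The plan is to reduce all three assertions to the finite-dimensional results of \cite{Ma} through a Galerkin approximation and then to transfer them to the limit by the stability of transportation inequalities under weak convergence. Let $\Pi_n$ be the orthogonal projection of $\HH$ onto $\HH_n:=\mathrm{span}\{e_1,\dots,e_n\}$ and consider the finite-dimensional SDE
\begin{align*}
dX^n_t=\Delta X^n_t\,dt+\Pi_nf(X^n_t)\,dt+\Pi_n\sigma(X^n_t)\,d\beta_t+\int_{\mathbb X}\Pi_nG(X^n_{t-},v)\,\widetilde N(dt,dv),\quad X^n_0=\Pi_nx,
\end{align*}
which lives on $\HH_n\cong\rr^n$ and whose coefficients are Lipschitz. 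The first thing I would verify is that the projected coefficients inherit the structural hypotheses \emph{uniformly in $n$}: since $\|\Pi_nA\|_{\HS}\le\|A\|_{\HS}$ and $\|\Pi_ny\|_{\HH}\le\|y\|_{\HH}$, the bounds $\|\Pi_n\sigma(x)\|_{\HS}\le\bar\sigma$ and $|\Pi_nG(x,v)|\le\bar G(v)$ persist, \eqref{condi} is untouched, and because $\langle z,\Pi_nw\rangle_{\HH}=\langle z,w\rangle_{\HH}$ for $z\in\HH_n$, the dissipativity \eqref{eq:diss} holds for the projected system with a constant $K_n\ge K$. Thus every $X^n$ carries the same exponential-integrability data $(\bar\sigma,\bar G,\Lambda)$ and at least the same contraction rate as $X$.

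For part (1) I would use a synchronous coupling. Let $X_t,Y_t$ solve \eqref{SPDE} with data $x,y$ and the \emph{same} noise, and put $Z_t:=X_t-Y_t$. Applying It\^o's formula to $\|Z_t\|_{\HH}^2$, taking expectations so that the Brownian and compensated-Poisson martingales vanish (the jump part contributing exactly $\int_{\mathbb X}\|G(X_t,v)-G(Y_t,v)\|_{\HH}^2\vartheta(dv)$), and invoking \eqref{eq:diss} with $x_1=X_t,\ x_2=Y_t$ yields $\frac{d}{dt}\mathbb E\|Z_t\|_{\HH}^2\le -K\,\mathbb E\|Z_t\|_{\HH}^2$. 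For $p\in[1,2]$ this controls $W_{p,d}(\delta_xP_t,\delta_yP_t)$ by $(\mathbb E\|Z_t\|_{\HH}^p)^{1/p}\le(\mathbb E\|Z_t\|_{\HH}^2)^{1/2}$, and optimizing over the coupling of the initial laws gives \eqref{contractive}. Since the space of probability measures with finite $p$-th moment is complete for $W_{p,d}$, the strict contraction of $P_{t_0}$ for large $t_0$ then produces, via the Banach fixed point theorem, the unique invariant measure $\mu=\mu P_t$.

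For parts (2) and (3) the finite-dimensional results of \cite{Ma} apply to each $X^n$ and give, for every $n$, the inequality \eqref{W1H} for $P^n_T(\Pi_nx,\cdot)$ and $\mu_n$, and the path-space inequality \eqref{L1} for the law $\pp^n_x$ of $X^n_{[0,T]}$, with functions $\alpha_T,\alpha^P_T$ that depend only on the $n$-independent data $K,\bar\sigma,\Lambda$ and are monotone in $K$ (so the bound for $X^n$ is at least as strong). It then remains to pass to the limit. In the Lipschitz regime a standard energy estimate together with Gr\"onwall's inequality gives $\mathbb E\big[\sup_{0\le t\le T}\|X^n_t-X_t\|_{\HH}^2\big]\to0$, whence $\pp^n_x\to\pp_x$ weakly in $\mathbb D([0,T];\HH)$ and, using the uniform contraction, $\mu_n\to\mu$ weakly. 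Invoking Lemma \ref{lem:stability:weak} (and its $\alpha$-$W_1H$ counterpart, see below) then transfers \eqref{W1H} and \eqref{L1} to the limiting laws. The stated lower bounds $\alpha_T(r)\ge K^{-1}\gamma^*_{1/2}(Kr)$ and $\alpha^P_T(r)\ge T\gamma^*_1(rK/T)$ I would obtain from the elementary estimates $e^{-Kt}\le1$, $\eta(t)\le1/K$ and the monotonicity and convexity of $\Lambda$ applied to the time-integrals defining $\alpha_T^*$ and $(\alpha^P_T)^*$.

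The hard part will be the passage to the limit for the genuinely non-quadratic $\alpha$-$W_1H$ inequalities, since Lemma \ref{lem:stability:weak} is phrased for the quadratic $W_pH(C)$. I would circumvent this through Theorem \ref{GL}: the inequality \eqref{W1H} for $\mu_n$ is equivalent to $\int_{\HH}e^{\lambda(f-\mu_n(f))}\,d\mu_n\le e^{\alpha_T^*(\lambda)}$ for all bounded $1$-Lipschitz $f$, a bound uniform in $n$; for such $f$ the function $x\mapsto e^{\lambda(f(x)-\mu_n(f))}$ is bounded and continuous, so weak convergence $\mu_n\to\mu$ carries the estimate to $\mu$, and Theorem \ref{GL} returns \eqref{W1H}. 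A second delicate point, specific to part (3), is that weak convergence holds in the Skorokhod topology while the transportation cost uses the $L^1$-metric $d_{L^1}$; I expect to reconcile this by exploiting that $d_{L^1}$ is lower semicontinuous for Skorokhod convergence, so that the exponential-integrability bound for $d_{L^1}$-Lipschitz functionals survives the limit. These two reconciliations — non-quadratic stability and the topology/metric mismatch on path space — are where I anticipate the real work to lie.
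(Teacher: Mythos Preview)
Your proposal is correct and mirrors the paper's strategy: Galerkin approximation, the finite-dimensional results of \cite{Ma}, and passage to the limit via stability under weak convergence. The two difficulties you anticipate largely dissolve: the paper establishes \emph{strong} convergence $\mathbb E\big[\sup_{0\le t\le T}\|X^{(n)}_t-X_t\|_{\HH}^2\big]\to 0$ (Proposition~\ref{prop convergence 1}), which dominates both Skorokhod and $d_{L^1}$ convergence on $\mathbb D([0,T];\HH)$ and so removes the topology mismatch for part~(3); as for the non-quadratic stability, your Gozlan--L\'eonard argument via Theorem~\ref{GL} is in fact more careful than the paper itself, which simply invokes Lemma~\ref{lem:stability:weak} (stated only for quadratic $W_pH(C)$).
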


According to the proof of  \cite[Corollary 2.7]{Wunew}, we can apply part $(3)$ of Theorem \ref{main1} to obtain
the following result.
\begin{corollary}   In the framework of Theorem \ref{main1}, let
$\mathcal{A}$ be a  family of real Lipschitzian functions
$f$ on $\HH$ with $\|f\|_{{\rm Lip}}:=\sup_{x\neq y\in \HH}|f(x)-f(y)|/\|x-y\|_{\HH}\le1$,  and
$$Z_T:=\sup_{f\in\mathcal{A}}\bigg(\frac1T\int_0^T
f(X_s)ds-\mu(f)\bigg).$$ We have for all $r, T>0,$
$$\log\pp \big(Z_T>\ee [Z_T]+r\big)\le
-\alpha_T^P(T r)\le -T\gamma_{1}^{\ast}(Kr). $$ The same
inequality holds for $Z_T=W_{1,d}(L_T, \mu),$ where
$L_T:=\frac{1}{T}\int_0^T\delta_{X_s}ds$ is the empirical
measure. \end{corollary}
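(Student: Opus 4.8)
The plan is to recognize $Z_T$ as a Lipschitz functional of the whole trajectory $X_{[0,T]}$ with respect to the metric $d_{L^1}$, and then to convert the path-space transportation inequality \eqref{L1} into a concentration bound via Gozlan--L\'eonard's criterion (Theorem \ref{GL}). First I would establish the Lipschitz property. Define, for a path $w\in\mathbb D([0,T];\HH)$,
$$\Phi(w):=\sup_{f\in\mathcal A}\Big(\frac1T\int_0^T f(w(s))\,ds-\mu(f)\Big),$$
so that $Z_T=\Phi(X_{[0,T]})$. Since every $f\in\mathcal A$ satisfies $\|f\|_{\Lip}\le1$, for any two paths $w,w'$ and any fixed $f\in\mathcal A$,
$$\Big|\frac1T\int_0^T\big(f(w(s))-f(w'(s))\big)\,ds\Big|\le\frac1T\int_0^T\|w(s)-w'(s)\|_{\HH}\,ds=\frac1T\,d_{L^1}(w,w').$$
Taking suprema and using $|\sup_f a_f-\sup_f b_f|\le\sup_f|a_f-b_f|$ gives $|\Phi(w)-\Phi(w')|\le\frac1T d_{L^1}(w,w')$; equivalently $T\Phi$ is $1$-Lipschitz on $(\mathbb D([0,T];\HH),d_{L^1})$.

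Next I would feed this into the transportation inequality. By part $(3)$ of Theorem \ref{main1}, $\pp_x$ satisfies the $\alpha^P_T$--$W_1H$ inequality on $(\mathbb D([0,T];\HH),d_{L^1})$, and $\alpha^P_T$ is non-decreasing, left-continuous, convex with $\alpha^P_T(0)=0$, so the equivalence $(i)\Leftrightarrow(ii)$ of Theorem \ref{GL} applies to the $1$-Lipschitz function $T\Phi$: for every $\lambda>0$,
$$\ee\big[e^{\lambda(T\Phi-\ee[T\Phi])}\big]\le e^{(\alpha^P_T)^*(\lambda)}.$$
Since $\ee[T\Phi]=T\,\ee[Z_T]$, the exponential Markov inequality yields, for $r>0$,
$$\pp\big(Z_T>\ee[Z_T]+r\big)\le e^{-\lambda Tr}\,\ee\big[e^{\lambda(T\Phi-\ee[T\Phi])}\big]\le e^{-\lambda Tr+(\alpha^P_T)^*(\lambda)}.$$
Optimizing over $\lambda>0$ and using that $\alpha^P_T$ is recovered from $(\alpha^P_T)^*$ by the inverse semi-Legendre transform (Fenchel--Moreau duality for non-decreasing convex $\alpha^P_T$ with $\alpha^P_T(0)=0$), I obtain $\log\pp(Z_T>\ee[Z_T]+r)\le-\alpha^P_T(Tr)$. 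Finally the lower bound in \eqref{L1bound} gives $\alpha^P_T(Tr)\ge T\gamma_1^*\big((Tr)K/T\big)=T\gamma_1^*(Kr)$, which is the asserted estimate.

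For the empirical-measure version, the Kantorovich--Rubinstein duality identifies
$$W_{1,d}(L_T,\mu)=\sup_{\|f\|_{\Lip}\le1}\Big(\frac1T\int_0^T f(X_s)\,ds-\mu(f)\Big),$$
i.e.\ exactly $\Phi(X_{[0,T]})$ with $\mathcal A$ taken to be the whole unit ball of Lipschitz functions, so the same three steps apply verbatim. The point I expect to require the most care is the passage from the bounded-Lipschitz hypothesis in Theorem \ref{GL}$(ii)$ to the a priori unbounded functional $T\Phi$: I would apply the criterion to the truncations $(-M)\vee(\Phi\wedge M)$, which are bounded and still $1/T$-Lipschitz, and let $M\to\infty$ by monotone convergence. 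Relatedly, one must verify $\ee[Z_T]=\ee\big[W_{1,d}(L_T,\mu)\big]<\infty$; this follows from the finite second-moment bound \eqref{eq: solution-integ2} (which controls $\ee\int_0^T\|X_s\|_{\HH}\,ds$, hence the first moment of $L_T$) together with the exponential integrability already encoded in \eqref{L1}.
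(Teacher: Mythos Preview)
Your proposal is correct and follows exactly the route the paper indicates: the paper's entire argument is the one-line reference ``According to the proof of \cite[Corollary 2.7]{Wunew}, we can apply part $(3)$ of Theorem \ref{main1},'' and what you have written is precisely that proof---recognize $T\Phi$ as $1$-Lipschitz for $d_{L^1}$, invoke Gozlan--L\'eonard's equivalence to get the exponential moment bound, and conclude by Chebyshev and the lower estimate in \eqref{L1bound}. Your care with the truncation step and the finiteness of $\ee[Z_T]$ fills in details the paper leaves to the cited reference.
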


\begin{proof}[Proof of Theorem \ref{main1}]
Recall that $\{e_1, e_2,\cdots \}$ is an orthonormal basis of $\HH$. Let $P_n:\VV^*\rightarrow \HH_n$ be defined by
\begin{equation}\label{eq proj}
 P_ny:=\sum_{i=1} {_{\VV^*}}\langle y,e_i\rangle_{\VV} e_i,\ \ y\in \VV^*.
\end{equation}
Then $P_n|_{\HH}$ is also the orthogonal projection onto $\HH_n$ in $\HH$ and we have
$$
{_{\VV^*}}\langle P_n\Delta u,v\rangle_{\VV}=
 \langle P_n \Delta u,v\rangle_{\HH}=
{_{\VV^*}}\langle \Delta u,v\rangle_{\VV}, \ \ \ \text{for all  }u\in \VV, v\in \HH_n,
$$
and
$
\|v\|_{\HH_n}=\|v\|_{\HH}  \ \text{for all } v\in \HH_n.
$

Let $\beta^{(n)}_t=\sum_{i=1}^n \beta_i e_i$. Then for any $x\in \HH$, we have
$$
P_n\sigma(x)d\beta_t=P_n\sigma(x)d\beta_t^{(n)}.
$$

Consider the following Galerkin approximations: $X^{(n)}\in \HH_n$ denotes the solution of the following  stochastic differential equation:
\begin{eqnarray}\label{F_G-01}
dX_t^{(n)}&=& P_n \Delta X_t^{(n)}dt+P_nf\left(X_t^{(n)}\right)dt+P_n\sigma\left(X_t^{(n)}\right) d\beta^{(n)}_t\nonumber\\
 & &+\int_{\mathbb{X}}P_n G\left(X_{t-}^{(n)},v\right)\widetilde N(dt,dv),
\end{eqnarray}
with initial condition $X^{(n)}_0=P_nX_0=P_n x\in \mathbb H_n$.
 By the  Lipschitz continuity of  $f, \sigma$ and $G$, we know that  the  equation (\ref{F_G-01})
 admits a unique strong solution $X_n\in \mathbb D([0,T];\HH_n)\cap L^2([0,T]; \VV_n)$. Furthermore, we have
 \begin{align}\label{eq: solution-integ3}
\sup_{n\ge1}\mathbb E\left[\sup_{0\le t\le T}\left\|X_t^{(n)}\right\|_{\HH_n}^2+ \int_0^T\left\|X_t^{(n)}\right\|_{\VV_n}^2dt\right]<\infty.
\end{align}
Since  $\lambda_1$ is the first eigenvalue of $-\Delta$, by  (H1)-(H3),  we have that for any $x_1, x_2\in \HH_n$,
\begin{align}\label{eq:diss n}
  &2\left\langle x_1-x_2, P_n \Delta (x_1-x_2)\right \rangle+2\left\langle x_1-x_2, P_nf(x_1)-P_nf(x_2) \right\rangle \notag\\ &+\left\|P_n(\sigma(x_1)-\sigma(x_2))\right\|_{\HS}^2+\int_{\mathbb X}\left\|P_nG(x_1, v)-P_nG(x_2, v)\right\|_{\HH_n}^2\vartheta(dv)\notag\\
 \le &  -K\left\|x_1-x_2\right\|_{\HH_n}^2.
  \end{align}
   If $K>0$, by \cite[Theorem 2.2]{Ma}, we know that  all the results in Theorem \ref{main1} replacing $X$ by $X^{(n)}$ hold. Hence, Proposition \ref{prop convergence 1} below  together with  Lemma \ref{lem:stability:weak}, implies  that  Theorem \ref{main1} holds.
The proof is complete.
\end{proof}

\begin{proposition}\label{prop convergence 1}
Under Conditions   (H1)-(H3), for any $t\ge0$, we have
\begin{equation}\label{eq convergence 1}
\lim_{n\rightarrow \infty}\mathbb E\left[\sup_{0\le s\le t}\left\|X_s-X^{(n)}_s \right\|_{\HH}^2+\int_0^t\left\|X_s-X^{(n)}_s \right\|_{\VV}^2ds \right]=0.
\end{equation}

\end{proposition}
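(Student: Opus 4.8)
The plan is to compare $X^{(n)}$ not with $X$ directly but with the finite-dimensional projection $P_nX$, splitting the error as
$$X_s-X_s^{(n)}=(I-P_n)X_s+\big(P_nX_s-X_s^{(n)}\big)=:R_s^{(n)}+Z_s^{(n)},$$
and treating the two pieces by completely different mechanisms. The tail $R^{(n)}$ carries no dynamics and its smallness is a pure approximation fact: since $X\in\mathbb D([0,t];\HH)$ its sample paths are regulated, hence have relatively compact range in $\HH$ for a.e.\ $\omega$, so $\sup_{s\le t}\|(I-P_n)X_s\|_{\HH}\to0$ pointwise; combined with $\mathbb E\sup_{s\le t}\|X_s\|_{\HH}^2<\infty$ from \eqref{eq: solution-integ2} and dominated convergence this gives $\mathbb E\sup_{s\le t}\|R_s^{(n)}\|_{\HH}^2\to0$. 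Likewise $\|(I-P_n)X_s\|_{\VV}^2\le\|X_s\|_{\VV}^2$ together with $\mathbb E\int_0^t\|X_s\|_{\VV}^2ds<\infty$ yields $\mathbb E\int_0^t\|R_s^{(n)}\|_{\VV}^2ds\to0$. It therefore suffices to prove the same two convergences for $Z^{(n)}$.

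The core is an energy estimate for $Z^{(n)}$. Testing \eqref{eq: solution-integ} against $e_k$, $k\le n$, shows that $P_nX$ is a genuine $\HH_n$-valued semimartingale with drift $\Delta P_nX_s+P_nf(X_s)$, Brownian part $P_n\sigma(X_s)\,d\beta_s$, and jump part $\int_{\mathbb X}P_nG(X_{s-},v)\widetilde N$. Subtracting \eqref{F_G-01} and using that $\Delta$ commutes with $P_n$ and preserves $\HH_n$, I would observe that $Z^{(n)}$ solves a finite-dimensional SDE whose Laplacian drift is exactly $\Delta Z_s^{(n)}$, so Itô's formula for $\|Z_t^{(n)}\|_{\HH}^2$ in $\HH_n$ produces the coercive term $-2\int_0^t\|Z_s^{(n)}\|_{\VV}^2ds$. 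The only bookkeeping subtlety is that the coefficients sit at the full $X_s$, so I would split, e.g., $f(X_s)-f(X_s^{(n)})=[f(X_s)-f(P_nX_s)]+[f(P_nX_s)-f(X_s^{(n)})]$, bounding the first bracket by $C_f\|R_s^{(n)}\|_{\HH}$ and the second by $C_f\|Z_s^{(n)}\|_{\HH}$ via (H1); the Hilbert--Schmidt diffusion term and the jump compensator $\int_{\mathbb X}\|P_n\Gamma_s(v)\|_{\HH}^2\vartheta(dv)$, with $\Gamma_s(v):=G(X_{s-},v)-G(X_{s-}^{(n)},v)$, are handled identically through (H2)--(H3). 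This separates every contribution into genuine-difference terms controlled by $C\|Z_s^{(n)}\|_{\HH}^2$ and perturbations controlled by $C\|R_s^{(n)}\|_{\HH}^2$ plus the cylindrical-noise truncation tail $\sum_{k>n}\|\sigma(X_s)e_k\|_{\HH}^2$, all of which vanish in $L^1(d\mathbb P\,ds)$ as $n\to\infty$ by \eqref{eq: solution-integ2} and dominated convergence. Since $Z_0^{(n)}=P_nx-P_nx=0$, taking expectations, dropping the nonnegative $\VV$-term, and applying Gronwall gives $\sup_{s\le t}\mathbb E\|Z_s^{(n)}\|_{\HH}^2\to0$, and feeding this back into the energy identity yields $\mathbb E\int_0^t\|Z_s^{(n)}\|_{\VV}^2ds\to0$.

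It remains to upgrade the $\HH$-convergence to the time-supremum, and this is where I expect the main obstacle. For the continuous martingale the Burkholder--Davis--Gundy inequality needs only the Hilbert--Schmidt bound of (H2), and the emerging $\sup_s\|Z_s^{(n)}\|_{\HH}^2$ factor is absorbed by Young's inequality (using that $\mathbb E\sup_{s\le t}\|Z_s^{(n)}\|_{\HH}^2<\infty$ a priori by \eqref{eq: solution-integ2}--\eqref{eq: solution-integ3}). The genuine difficulty is the jump martingale, whose integrand contains the quartic-looking term $\|P_n\Gamma_s(v)\|_{\HH}^2$: a naive BDG bound would require a fourth moment of $G$, which (H3) does not supply. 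I would circumvent this by exploiting signs. The cross term $2\langle Z_{s-}^{(n)},P_n\Gamma_s(v)\rangle_{\HH}$ is controlled by BDG and Cauchy--Schwarz using only $\int_{\mathbb X}\|\Gamma_s\|_{\HH}^2\vartheta(dv)\le C_G\|X_{s-}-X_{s-}^{(n)}\|_{\HH}^2$, whereas the nonnegative term $\|P_n\Gamma_s(v)\|_{\HH}^2$ makes both $\int\!\int\|P_n\Gamma\|^2N$ and its compensator increasing, so the supremum of the associated compensated integral is dominated by twice the compensator $\int_0^t\!\int_{\mathbb X}\|P_n\Gamma_s\|_{\HH}^2\vartheta(dv)ds$ — again only second moments. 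Combining these bounds with the already-established $\mathbb E\int_0^t\|Z_s^{(n)}\|_{\HH}^2ds\to0$ and the vanishing perturbations, and absorbing the supremum term, yields $\mathbb E\sup_{s\le t}\|Z_s^{(n)}\|_{\HH}^2\to0$. Together with the bounds on $R^{(n)}$ this gives \eqref{eq convergence 1}; the delicate point is precisely this simultaneous control of the time-supremum and the $\VV$-norm in the presence of jump noise under only the second-moment hypothesis (H3), everything else being the standard coercivity/Gronwall scheme.
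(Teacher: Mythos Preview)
Your argument is correct and follows the same energy-estimate/Gronwall scheme as the paper, but with a different decomposition. The paper applies It\^o's formula directly to $\|X_t-X_t^{(n)}\|_{\HH}^2$ in the infinite-dimensional variational framework, then splits each coefficient as $(I-P_n)f(X_s)+P_n\big(f(X_s)-f(X_s^{(n)})\big)$ (and similarly for $\sigma$, $G$); the residual terms that must vanish as $n\to\infty$ are therefore $\|(I-P_n)f(X_s)\|_{\HH}^2$, $\|(I-P_n)\sigma(X_s)\|_{\HS}^2$, and $\int_{\mathbb X}\|(I-P_n)G(X_s,v)\|_{\HH}^2\vartheta(dv)$. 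You instead peel off $R^{(n)}=(I-P_n)X$ at the level of the process, using the relative compactness of c\`adl\`ag trajectories in $\HH$ to send $\sup_s\|R_s^{(n)}\|_{\HH}\to0$, and then run the It\^o/Gronwall argument for the genuinely finite-dimensional $Z^{(n)}=P_nX-X^{(n)}$, splitting coefficients through the intermediate point $P_nX_s$; your residuals are then $\|R_s^{(n)}\|_{\HH}^2$. Your route avoids the infinite-dimensional It\^o formula at the price of the extra compactness step; the paper's route is shorter but needs the variational It\^o formula for $X-X^{(n)}$.

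One remark on what you flag as ``the main obstacle'': the paper handles the jump quadratic correction $\int_0^{\cdot}\!\int_{\mathbb X}\|\cdot\|_{\HH}^2\,N(ds,dv)$ exactly as you suggest, by monotonicity---its supremum over $[0,t]$ is just its value at $t$, whose expectation equals the compensator---so only the second-moment bound in (H3) is used. Your two-stage scheme (first $\sup_s\mathbb E\|Z_s^{(n)}\|_{\HH}^2\to0$ via Gronwall, then upgrade to $\mathbb E\sup_s$) is unnecessary: as in the paper you can take the supremum in the It\^o identity before taking expectation, bound the martingale suprema by BDG, absorb the resulting $\frac12\,\mathbb E\sup_s\|Z_s^{(n)}\|_{\HH}^2$ into the left-hand side, and apply Gronwall once to $t\mapsto\mathbb E\sup_{s\le t}\|Z_s^{(n)}\|_{\HH}^2$.
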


\begin{proof} Applying  It\^o's formula to $\left\|X_t-X^{(n)}_t\right\|_{\HH}^2$, we obtain that
\begin{align}\label{eq 1.1}
&\left\|X_t-X^{(n)}_t\right\|_{\HH}^2+2\int_0^t\left\|X_s-X^{(n)}_s\right\|_{\VV}^2ds\notag\\
=& \|(I-P_n)x\|_{\HH}^2+ 2\int_0^t \left\langle X_s-X^{(n)}_s, f(X_s)-P_nf\left( X^{(n)}_s \right) \right\rangle_{\HH} ds \notag\\
&+  2\int_0^t \left\langle X_s-X^{(n)}_s, \left[\sigma(X_s)-P_n\sigma\left( X^{(n)}_s\right) \right]d\beta_s\right\rangle_{\HH} \notag\\
&+ \int_0^t\left\|\sigma(X_s)-P_n\sigma\left( X^{(n)}_s \right)\right\|_{\HS}^2ds\notag\\
&+  2\int_0^t\int_{\mathbb X} \left\langle X_{s-}-X^{(n)}_{s-},  G\left(X_{s-}, v\right)-P_nG\left( X^{(n)}_{s-},v\right)\right\rangle_{\HH} \widetilde N(ds, dv) \notag\\
&+\int_0^t\int_{\mathbb X}\left\|G\left(X_{s-}, v\right)-P_nG\left( X^{(n)}_{s-},v\right)\right\|_{\HH}^2N(ds,dv).
\end{align}
Taking the supremum up to $t$ in \eqref{eq 1.1}, and then taking the expectation, we have

\begin{align}\label{eq 1.2}
&\ee \left[\sup_{0\le s\le t}\left\|X_s-X^{(n)}_s\right\|_{\HH}^2\right]+2\ee \int_0^t\left\|X_s-X^{(n)}_s\right\|_{\VV}^2ds\notag\\
\le &\ee\left[\|(I-P_n)x\|_{\HH}^2\right]+ 2\ee \int_0^t \left|\left\langle X_s-X^{(n)}_s, f(X_s)-P_nf\left( X^{(n)}_s \right) \right\rangle_{\HH}\right| ds \notag\\
&+  2\ee \left[\sup_{0\le s\le t}\left|\int_0^s \left\langle X_r-X^{(n)}_r, \left[\sigma(X_r)-P_n\sigma\left( X^{(n)}_r\right) \right]d\beta_r\right\rangle_{\HH}\right| \right]\notag\\
&+ \ee \int_0^t\left\|\sigma(X_s)-P_n\sigma\left( X^{(n)}_s \right)\right\|_{\HS}^2ds\notag\\
&+  2\ee \left[\sup_{0\le s\le t} \left|\int_0^s\int_{\mathbb X} \left\langle X_{r-}-X^{(n)}_{r-},  G\left(X_{r-}, v\right)-P_nG\left( X^{(n)}_{r-},v\right)\right\rangle_{\HH} \widetilde N(dr, dv)\right|\right] \notag\\
&+\ee \left[\sup_{0\le s\le t} \int_0^s\int_{\mathbb X}\left\|G\left(X_{r-}, v\right)-P_nG\left( X^{(n)}_{r-},v\right)\right\|_{\HH}^2N(dr,dv)\right]\notag\\
=:&\ee\left[\|(I-P_n)x\|_{\HH}^2\right]+I_1^{(n)}(t)+I_2^{(n)}(t)+\cdots+ I_5^{(n)}(t).
\end{align}
For $I_1^{(n)}$, by the Lipschitz continuity of $f$ and the elementary inequality $2ab\le a^2+b^2$ for all $a,b>0$,  we have
\begin{align}\label{eq I1}
I_1^{(n)}(t)\le & 2\ee \int_0^t \left|\left\langle X_s-X^{(n)}_s, f(X_s)-P_nf\left(X_s \right) \right\rangle_{\HH}\right| ds\notag\\
&+ 2\ee \int_0^t \left|\left\langle X_s-X^{(n)}_s,  P_n\left(f(X_s)-f\left( X^{(n)}_s \right)\right) \right\rangle_{\HH}\right|ds \notag\\
\le & \left(1+ 2 C_f \right)\ee \int_0^t\left\|X_s-X^{(n)}_s\right\|_{\HH}^2ds+\ee \int_0^t \left\|(I-P_n)f(X_s)\right\|_{\HH}^2ds.
\end{align}
By Burkholder-Davis-Gundy's inequality  and the Lipschitz continuity of $\sigma$, we have
\begin{align}\label{eq I2}
I_2^{(n)}(t)\le &  2\ee \left[\sup_{0\le s\le t}\left|\int_0^s \left\langle X_r-X^{(n)}_r, \left[\sigma(X_r)-P_n\sigma\left(X_r\right) \right]d\beta_r\right\rangle_{\HH}\right| \right]\notag\\
&+2\ee \left[\sup_{0\le s\le t}\left|\int_0^s \left\langle X_r-X^{(n)}_r, P_n\left[\sigma(X_r)-\sigma\left( X^{(n)}_r\right) \right]d\beta_r\right\rangle_{\HH}\right| \right]\notag\\
\le & 4 \ee \left[ \int_0^t \left\|X_s-X^{(n)}_s\right\|_{\HH}^2\cdot \left\|(I-P_n)\sigma\left(X_s\right) \right\|_{\HS}^2ds \right]^{\frac12}+4C_{\sigma} \ee \left[ \int_0^t \left\|X_s-X^{(n)}_s\right\|_{\HH}^4 ds \right]^{\frac12}\notag\\
\le& 2 \ee  \int_0^t \left\|X_s-X^{(n)}_s\right\|_{\HH}^2ds+ 2 \ee  \int_0^t \left\|(I-P_n)\sigma\left(X_s\right) \right\|_{\HS}^2ds  \notag\\
&+ 4C_{\sigma} \ee \left[\sup_{0\le s\le t}\left\|X_s-X^{(n)}_s\right\|_{\HH} \cdot \left(\int_0^t\left\|X_s-X^{(n)}_s\right\|_{\HH}^2 ds \right)^{\frac12}\right]\notag\\
\le& 2 \ee  \int_0^t \left\|X_s-X^{(n)}_s\right\|_{\HH}^2ds+ 2 \ee  \int_0^t \left\|(I-P_n)\sigma\left(X_s\right) \right\|_{\HS}^2ds  \notag\\
&+ \frac14  \ee \left[\sup_{0\le s\le t}\left\|X_s-X^{(n)}_s\right\|_{\HH}^2\right] +16C_{\sigma}^2 \ee  \int_0^t \left\|X_s-X^{(n)}_s\right\|_{\HH}^2 ds.
\end{align}
For the third term $I_3^{(n)}$, by the Lipschitz continuity of $\sigma$,  we have
\begin{align}\label{eq I3}
I_3^{(n)}(t)\le &  \ee \int_0^t\left\|(I-P_n)\sigma(X_s)\right\|_{\HS}^2ds\notag\\
&+  \ee \int_0^t\left\|P_n\sigma(X_s)-P_n\sigma\left( X^{(n)}_s \right)\right\|_{\HS}^2ds\notag\\
\le & \ee \int_0^t\left\|(I-P_n)\sigma(X_s)\right\|_{\HS}^2ds + C_{\sigma}^2 \ee \int_0^t\left\|X_s-X^{(n)}_s\right\|_{\HH}^2ds.
\end{align}
By Burkholder-Davis-Gundy's inequality and the Lipschitz continuity of $G$, we have
\begin{align}\label{eq I4}
 I_4^{(n)}(t)\le& 2\ee \left[\sup_{0\le s\le t} \left|\int_0^s\int_{\mathbb X} \left\langle X_{r-}-X^{(n)}_{r-}, (I-P_n) G\left(X_{r-}, v\right) \right\rangle_{\HH} \widetilde N(dr,dv)\right|\right]\notag\\
 & +2\ee \left[\sup_{0\le s\le t} \left|\int_0^s\int_{\mathbb X} \left\langle X_{r-}-X^{(n)}_{r-},  P_n G\left(X_{r-}, v\right)-P_nG\left( X^{(n)}_{r-},v\right)\right\rangle_{\HH} \widetilde N(dr,dv)\right|\right]\notag\\
 \le & 4\ee \left[\int_0^t\int_{\mathbb X} \left\|X_s-X_s^{(n)}\right\|_{\HH}^2\cdot \left\|(I-P_n)G(X_s,v)\right\|_{\HH}^2\vartheta (dv)ds  \right]^{\frac12} \notag\\
 &+4  \ee \left[\int_0^t\int_{\mathbb X}\left\|X_s-X^{(n)}_s\right\|_{\HH}^2\cdot \left\|P_n G\left(X_{s}, v\right)-P_nG\left( X^{(n)}_{s},v\right)\right\|_{\HH}^2\vartheta (dv)ds\right]^{\frac12} \notag\\
 \le & 4\ee \int_0^t\left\|X_s-X_s^{(n)}\right\|_{\HH}^2ds+4\ee\int_0^t\int_{\mathbb X}\left\|(I-P_n)G(X_s,v)\right\|_{\HH}^2\vartheta (dv)ds\notag\\
 &+4 C_G \ee \left[\int_0^t \sup_{0\le r\le s}\left\|X_r-X^{(n)}_r\right\|_{\HH}^2 \cdot  \left\|X_s-X^{(n)}_s\right\|_{\HH}^2ds\right]^{\frac12}\notag\\
  \le & 4\ee \int_0^t\left\|X_s-X_s^{(n)}\right\|_{\HH}^2ds+4\ee\int_0^t\int_{\mathbb X}\left\|(I-P_n)G(X_s,v)\right\|_{\HH}^2\vartheta (dv)ds\notag\\
 &+\frac{1}{4}\ee \left[ \sup_{0\le s\le t}\left\|X_s-X^{(n)}_s\right\|_{\HH}^2\right]+ 16 C_G^2  \ee \int_0^t\left\|X_s-X^{(n)}_s\right\|_{\HH}^2ds.
\end{align}
For the last term, we have
\begin{align}\label{eq I5}
I_5^{(n)}(t)=&\ee \int_0^t\int_{\mathbb X}\left\|G\left(X_{s-}, v\right)-P_nG\left( X^{(n)}_{s-},v\right)\right\|_{\HH}^2N(ds,dv)\notag\\
=& \ee \int_0^t\int_{\mathbb X}\left\|G\left(X_{s}, v\right)-P_nG\left( X^{(n)}_{s},v\right)\right\|_{\HH}^2\vartheta (dv)ds\notag\\
\le & \ee\int_0^t\int_{\mathbb X}\left\|(I-P_n)G(X_s,v)\right\|_{\HH}^2\vartheta (dv)ds+ C_G \ee \int_0^t\left\|X_s-X^{(n)}_s\right\|_{\HH}^2ds.
\end{align}
Putting  the above inequalities together, we get
\begin{align}\label{eq I6}
& \ee \left[\sup_{0\le s\le t}\left\|X_s-X^{(n)}_s\right\|_{\HH}^2\right]+2\ee \int_0^t\left\|X_s-X^{(n)}_s\right\|_{\VV}^2ds\notag\\
\le &\|(I-P_n)x\|_{\HH}^2+ 2(7+2C_f+17C_{\sigma}^2+17C_G)\ee \int_0^t  \left\|X_s-X^{(n)}_s\right\|_{\HH}^2ds\notag\\
 &+2\ee \int_0^t \left\|(I-P_n)f(X_s)\right\|_{\HH}^2ds
 + 6 \ee  \int_0^t \left\|(I-P_n)\sigma\left(X_s\right) \right\|_{\HS}^2ds\notag\\
&+10\ee\int_0^t\int_{\mathbb X}\left\|(I-P_n)G(X_s,v)\right\|_{\HH}^2\vartheta (dv)ds.
\end{align}

By \eqref{eq: solution-integ2} and \eqref{eq: solution-integ3}, we know that  $\ee \left[\sup_{0\le s\le t}\left\|X_s-X^{(n)}_s\right\|_{\HH}^2\right]<\infty$. Hence, by Gronwall's inequality,  Fatou's lemma and \eqref{eq I6}, we obtain the desired result \eqref{eq convergence 1}.
The proof is complete.
\end{proof}

\section{Transportation cost inequalities for   SPDE with  L\'evy noise and non-Lipschitz reaction term}

\subsection{SPDE with  L\'evy noise and non-Lipschitz reaction term}

Let $\HH, \VV, \beta, \sigma, \widetilde N$ be the same as those in precedent  section. In this section,  we extend  the reaction term $f$ from the Lipschitz case  to   the non-Lipschitz case, for example, one can take $f(x)=-x^3+C_1x$ for some $C_1\in \mathbb R$.

Consider the  following  SPDE on the Hilbert space $\HH$:
\begin{eqnarray}\label{SPDE2}
\left\{
 \begin{array}{lll}
 & dX_t=\Delta X_tdt+f(X_t)dt+\sigma(X_t)d\beta_t+\int_{\mathbb X} G(X_{t-},v)\widetilde{N}(dt,dv); \\
 &X_0=x\in \HH,\\
 \end{array}
\right.
\end{eqnarray}
Suppose that
\begin{itemize}
  \item[(H4)] the reaction term $f$ is a third degree polynomial with the negative leading coefficient,
    \begin{align}f(x)=-x^3+C_1x, \ \ \ \forall x\in \HH, \end{align}
  where $C_1\in\mathbb R$.
  \item[(H5)] $G$ satisfies the following condition:
  \begin{align}
&\int_{\mathbb X}\|G(x, v)\|_{\HH}^6\vartheta(dv)\le C_G'\left(1+\|x\|_{\HH}^6\right).
  \end{align}
\end{itemize}

\begin{definition} An $\HH$-valued right continuous with left limits $(\mathcal F_t)$-adapted process $\{X_t\}_{t\in[0,T]}$ is called a solution of \eqref{SPDE2}, if for its $dt\times \mathbb P$-equivalent class $\hat{X}$, we have
  $\hat X\in \mathbb D([0,T];\HH)\cap L^{2}((0,T];\VV), \ \mathbb P$-a.s. and
  the following equality holds $\mathbb P$-a.s.:
  \begin{align*}
  X_t=&x+ \int_0^t\Delta \bar X_sds+\int_0^tf(\bar X_s)ds+\int_0^t\sigma(\bar X_s)d\beta_s\\
  &+\int_0^t\int_{\mathbb X} G(\bar X_{s-},v)\widetilde{N}(ds,dv),  \ \ t\in[0,T],
  \end{align*}
  where $\bar X$ is any $\VV$-valued progressively measurable $dt\times \mathbb P$ version of $\hat X$.

\end{definition}

Brze\'zniak {\it et al.} \cite{BLZ} proved the following result for the solution of Eq. \eqref{SPDE2}.

\begin{theorem} \cite[Theorem 1.2 and Example 2.2]{BLZ}  Under  (H2)-(H5), for any $x\in L^6(\Omega, \mathcal F_0, \mathbb P; \HH)$, \eqref{SPDE2} admits a unique solution $\{X_t\}_{t\in[0,T]}$, and there exists a constant $C>0$ such that
\begin{align}\label{eq solut integ}
\mathbb E\left(\sup_{t\in[0,T]}\|X_t\|_{\HH}^6\right)+\mathbb E\int_0^T \|X_t\|_{\HH}^4\cdot\|X_t\|_{\VV}^2dt\le C\left(1+\mathbb E \|x\|_{\HH}^6\right).
\end{align}
\end{theorem}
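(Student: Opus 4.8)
The plan is to run the standard Galerkin/variational scheme, the novelty being that the coercivity and monotonicity are supplied by the cubic term $-x^3$ rather than by a Lipschitz hypothesis. First I would introduce the finite-dimensional approximations $X^{(n)}\in\HH_n$ as in \eqref{F_G-01}, now with the polynomial reaction term $f(x)=-x^3+C_1x$. Because $f$ is smooth of polynomial growth, the projected drift $P_n\Delta+P_nf$ is locally Lipschitz on $\HH_n$, so each approximating SDE admits a unique local strong solution; global existence will follow once the uniform a priori bounds below are established.

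The core of the argument is a uniform-in-$n$ sixth-moment estimate, obtained by applying It\^o's formula to $\|X_t^{(n)}\|_\HH^6=(\|X_t^{(n)}\|_\HH^2)^3$. The two deterministic leading contributions are
\begin{align*}
2\langle X^{(n)},P_n\Delta X^{(n)}\rangle_\HH&=-2\|X^{(n)}\|_\VV^2,\\
2\langle X^{(n)},P_nf(X^{(n)})\rangle_\HH&=-2\int_0^1(X^{(n)})^4\,d\xi+2C_1\|X^{(n)}\|_\HH^2.
\end{align*}
The chain rule for the sixth power produces a factor $3\|X^{(n)}\|_\HH^4$ in front of these, so the coercive term $-2\|X^{(n)}\|_\VV^2$ gives rise to exactly the mixed quantity $\|X^{(n)}\|_\HH^4\|X^{(n)}\|_\VV^2$ on the left of \eqref{eq solut integ}, while $-2\int(X^{(n)})^4\,d\xi$ supplies additional strong dissipation. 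The Gaussian martingale and the compensated-jump terms are handled by the Burkholder--Davis--Gundy inequality together with the growth bounds coming from (H2), (H3) and (H5); the sixth-moment hypothesis (H5) on $G$ is precisely what is needed to bound the jump contribution at the level of $\|\cdot\|_\HH^6$. Absorbing the dissipative terms and invoking Gronwall's inequality yields
$$\sup_{n\ge1}\left\{\ee\sup_{t\in[0,T]}\|X_t^{(n)}\|_\HH^6+\ee\int_0^T\|X_t^{(n)}\|_\HH^4\|X_t^{(n)}\|_\VV^2\,dt\right\}\le C(1+\ee\|x\|_\HH^6).$$

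Uniqueness I would derive from the pointwise monotonicity of the cubic: for real numbers $u,v$,
$$(u^3-v^3)(u-v)=(u-v)^2(u^2+uv+v^2)\ge0,$$
so that $\langle f(u)-f(v),u-v\rangle_\HH\le C_1\|u-v\|_\HH^2$. Combined with the coercivity of $\Delta$ and the Lipschitz hypotheses (H2)--(H3) on $\sigma$ and $G$, this produces a genuine one-sided monotonicity inequality for the full operator. Applying It\^o's formula to $\|X_t-Y_t\|_\HH^2$ for two solutions sharing the same initial datum, all nonlinear contributions are then dominated by a constant multiple of $\|X_t-Y_t\|_\HH^2$, and Gronwall's lemma forces $X\equiv Y$.

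The main obstacle is passing to the limit in the nonlinear term $f(X^{(n)})=-(X^{(n)})^3$, where the a priori bounds only supply weak compactness. The crucial one-dimensional input is the Gagliardo--Nirenberg interpolation $\|w\|_{L^6}^6\le C\|w\|_\VV^2\|w\|_\HH^4$ valid for $w\in\VV=\HH_0^1(0,1)$; since $\|f(w)\|_\HH^2=\|w\|_{L^6}^6$, the mixed term in the moment bound shows that $f(X^{(n)})$ is bounded in $L^2(\Omega\times[0,T];\HH)$. Hence, along a subsequence, $X^{(n)}\rightharpoonup X$ in $L^2(\Omega\times[0,T];\VV)$ and $f(X^{(n)})\rightharpoonup\Phi$ in $L^2(\Omega\times[0,T];\HH)$, and I would identify $\Phi=f(X)$ by the monotonicity (Minty--Browder) method, testing the limiting equation against $X$ and using the one-sided monotonicity established above together with weak lower semicontinuity. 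This shows that the limit $X$ solves \eqref{SPDE2}, and the moment estimate \eqref{eq solut integ} for $X$ follows from the uniform bound by Fatou's lemma.
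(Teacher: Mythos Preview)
The paper does not supply its own proof of this statement: it is quoted verbatim as \cite[Theorem 1.2 and Example 2.2]{BLZ} and simply invoked as background. There is therefore nothing in the paper to compare your argument against. Your sketch is a faithful outline of the strategy actually used in \cite{BLZ}: Galerkin approximation, an a priori $p$th-moment estimate via It\^o's formula (here with $p=6$, which is what (H5) is tailored for), local monotonicity of the cubic for uniqueness, and the Minty--Browder trick for identifying the weak limit of $f(X^{(n)})$. The paper does rely on part of this machinery later---Lemma \ref{Lemma-estimation} notes that ``Applying It\^o's formula with $p=2$ (instead of taking $p=\beta+2$) in the proof of \cite[Lemma 4.2]{BLZ})'' gives \eqref{estimation-01}, and the uniform bound \eqref{eq solut integ3} is asserted ``using the same method in the proof of \eqref{eq solut integ}''---so your outline is consistent with what the authors have in mind, but a direct comparison with a proof in this paper is not possible because none is given.
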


 \begin{theorem}\label{main 2}
 Assume   (H2)-(H5) hold with $2C_1+C_{\sigma}^2+C_G<2\lambda_1$,  $\|\sigma(x)\|_{\rm HS}\le
\bar\sigma$ for any $x\in \HH$ and there is some
Borel-measurable function $\bar G(u)$ on $\mathbb X$
such that $|G(x, u)|\le\bar G(u)$ for all  $x\in \HH, u\in \mathbb X$ and \eqref{condi} holds. Then the statements in Theorem  \ref{main1} hold with $K=2\lambda_1-2C_1-C_{\sigma}^2-C_G$.

\end{theorem}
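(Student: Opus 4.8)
The plan is to reuse the Galerkin machinery from the proof of Theorem~\ref{main1}: project \eqref{SPDE2} onto $\HH_n$ to obtain the finite-dimensional equations \eqref{F_G-01} with the cubic drift $P_nf$, apply \cite[Theorem~2.2]{Ma} to each approximation $X^{(n)}$, and then transfer the resulting inequalities to the limit by the weak-stability Lemma~\ref{lem:stability:weak}. The decisive structural fact is that the functionals $\alpha_T$ and $\alpha_T^P$ in Theorem~\ref{main1} depend only on $K$, $\bar\sigma$ and $\Lambda$, and not on any Lipschitz bound for $f$; hence every $X^{(n)}$ satisfies \emph{the same} three inequalities, and their common form survives the passage to the limit.

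First I would check that the globally dissipative estimate \eqref{eq:diss} persists with the announced constant $K=2\lambda_1-2C_1-C_\sigma^2-C_G$, despite the loss of (H1). The cubic part of $f$ is monotone: since $(a-b)(a^3-b^3)=(a-b)^2(a^2+ab+b^2)\ge0$ for all $a,b\in\rr$, integrating over $[0,1]$ gives $\langle x_1-x_2, f(x_1)-f(x_2)\rangle_{\HH}\le C_1\|x_1-x_2\|_{\HH}^2$ for every $x_1,x_2\in\HH$. Combining this with $\langle x,\Delta x\rangle_{\HH}\le-\lambda_1\|x\|_{\HH}^2$, (H2) and (H3) reproduces \eqref{eq:diss}, and the projected version \eqref{eq:diss n} follows verbatim on each $\HH_n$ because $P_n$ is the orthogonal projection and $x_1-x_2\in\HH_n$. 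Thus the dissipativity, and hence the hypotheses of \cite[Theorem~2.2]{Ma} (together with the inherited bounds $\bar\sigma$, $\bar G$ and \eqref{condi}), hold uniformly in $n$, so each $X^{(n)}$ obeys the analogues of \eqref{contractive}, \eqref{spe}, \eqref{W1H} and \eqref{L1} with the same $K$, $\alpha_T$ and $\alpha_T^P$.

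The main difficulty is to replace Proposition~\ref{prop convergence 1}, whose Gronwall argument used the Lipschitz continuity of $f$ in an essential way and breaks down for a cubic reaction term: after the decomposition $f(X_s)-P_nf(X^{(n)}_s)=(I-P_n)f(X_s)+P_n\big(f(X_s)-f(X^{(n)}_s)\big)$, the monotonicity bound $\langle Z_s, f(X_s)-f(X^{(n)}_s)\rangle_{\HH}\le C_1\|Z_s\|_{\HH}^2$ (with $Z_s:=X_s-X^{(n)}_s$) still controls the diagonal part, but the residual term $\langle(I-P_n)X_s, f(X_s)-f(X^{(n)}_s)\rangle_{\HH}$ now carries the uncontrolled cubic growth of $f$ and cannot be closed by Gronwall alone. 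Instead of strong $L^2$-convergence I would argue by tightness. Starting from the uniform sixth-moment bounds $\sup_n\ee[\sup_{t\le T}\|X^{(n)}_t\|_{\HH}^6]<\infty$ and $\sup_n\ee\int_0^T\|X^{(n)}_t\|_{\HH}^4\|X^{(n)}_t\|_{\VV}^2\,dt<\infty$, which are the $n$-uniform version of \eqref{eq solut integ}, the compact embedding of $\VV$ into $\HH$ together with an Aldous-type control of the c\`adl\`ag increments yields tightness of the laws of $X^{(n)}$ in $\mathbb{D}([0,T];\HH)$. Every weak limit point is then identified as a solution of \eqref{SPDE2}; the hardest point is the passage to the limit in the nonlinear drift, which I would handle through the local monotonicity of $f$ and the uniform moment bounds. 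By the uniqueness in \cite[Theorem~1.2]{BLZ} the limit is the law of $X$, so the whole sequence converges weakly.

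With the weak convergence in hand, Lemma~\ref{lem:stability:weak} upgrades each inequality from $X^{(n)}$ to $X$: applied on $\HH$ to the time-$T$ marginals it gives the transition-probability inequality \eqref{W1H} of part~(2); applied on $\mathbb{D}([0,T];\HH)$ with $d_{L^1}$ it gives the path-space inequality \eqref{L1} of part~(3), using that Skorokhod convergence of the uniformly integrable family $X^{(n)}$ forces $d_{L^1}$-weak convergence; and combining the $n$-uniform contraction \eqref{contractive} with the convergence of the invariant measures $\mu_n$ of $X^{(n)}$ produces the unique invariant measure $\mu$ of $X$, the $W_{p,d}$-contraction, and \eqref{spe}, which is part~(1). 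I expect the tightness on the Skorokhod path space and the identification of the nonlinear limit to absorb essentially all the work.
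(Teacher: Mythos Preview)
Your overall strategy coincides with the paper's: project onto $\HH_n$, verify the dissipativity \eqref{eq:diss n} with $K=2\lambda_1-2C_1-C_\sigma^2-C_G$ using the monotonicity of $x\mapsto x^3$, invoke \cite[Theorem~2.2]{Ma} on each $X^{(n)}$, and push the $n$-independent inequalities to the limit via Lemma~\ref{lem:stability:weak}. Your observation that $\alpha_T$ and $\alpha_T^P$ depend only on $K,\bar\sigma,\Lambda$ and not on any Lipschitz bound of $f$ is exactly the point that makes the scheme work.

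The divergence from the paper is in the tightness step, and here your proposal has a gap. You claim that the uniform sixth-moment bounds and the compact embedding $\VV\hookrightarrow\HH$, combined with an Aldous-type increment control, give tightness of $\{X^{(n)}\}$ in $\mathbb D([0,T];\HH)$. But Aldous' criterion in an infinite-dimensional state space requires, for each fixed $t$, tightness of $\{X^{(n)}_t\}$ in $\HH$, i.e.\ compact containment. The available a priori estimate is only $\sup_n\ee\int_0^T\|X^{(n)}_t\|_{\VV}^2\,dt<\infty$; you do not have $\sup_n\ee\|X^{(n)}_t\|_{\VV}^2<\infty$ for fixed $t$, and the $\HH$-bounds $\sup_n\ee\sup_t\|X^{(n)}_t\|_{\HH}^6<\infty$ give no compactness in $\HH$. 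With jump noise the solution is merely c\`adl\`ag in $\HH$ and need not take values in $\VV$ at jump times, so pointwise $\VV$-regularity is genuinely unavailable. This blocks the direct route to tightness in $\mathbb D([0,T];\HH)$ as you describe it.

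The paper avoids this by changing the target space. It proves tightness of $\{X^{(n)}\}$ only in $L^2([0,T];\HH)$, using the Lions--Temam compactness (Lemma~\ref{Compact}): from \eqref{estimation-01} and fractional-Sobolev estimates $\sup_n\ee\|X^{(n)}\|_{W^{\alpha,2}([0,T];\VV^*)}<\infty$ for $\alpha\in(0,\tfrac12)$, the compact embedding $L^2([0,T];\VV)\cap W^{\alpha,2}([0,T];\VV^*)\hookrightarrow L^2([0,T];\HH)$ yields tightness. Identification of the limit then follows as in \cite{BLZ}. Since $d_{L^1}\le\sqrt{T}\,d_{L^2}$, weak convergence in $L^2([0,T];\HH)$ is enough to transfer the path-space inequality \eqref{L1} via Lemma~\ref{lem:stability:weak}. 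The fixed-time convergence $X^{(n)}_T\Rightarrow X_T$ in $\HH$, needed for \eqref{W1H}, does \emph{not} follow from $L^2([0,T];\HH)$-convergence and is established separately: the paper passes to a Skorokhod representation, writes both $X$ and $X^{(n)}$ in mild form \eqref{eq:mild solution1}--\eqref{eq:mild solution2}, and shows $\bar X^{(n)}_T\to\bar X_T$ in $\HH$ in probability on the truncation sets $\bar\Omega_{n,M}$ by exploiting the smoothing of $S(t)$ and the local Lipschitz estimate $\|f(x)-f(y)\|_{\HH}\le C(1+\|x\|_{\HH}\|x\|_{\VV}+\|y\|_{\HH}\|y\|_{\VV})\|x-y\|_{\HH}$. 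This two-step scheme (tightness in $L^2_t\HH$ for (C1), mild-form argument for (C2)) replaces your single Skorokhod-space tightness claim and is where essentially all the work goes.
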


\begin{remark} The condition $2C_1+C_{\sigma}^2+C_G<2\lambda_1$  guarantees the global dissipation for the system \eqref{SPDE2}, and we could apply finite dimensional SDE's results in \cite{Ma}.  When $C_1$ is large,  the system \eqref{SPDE2} is dissipative outside a bounded set,   by M. Majka's work \cite{Majka}, one expects that the transportation cost inequalities should hold.  However, in  \cite{Majka},    the non-degenerated conditions of  the noises are assumed to make the mirror coupling successful.   Thus, to remove the restriction of $C_1$ in  Theorem \ref{main 2}, we need some extra non-degenerated conditions of the noises.  This is not the task of this paper,  and we hope study it in  future.
\end{remark}

 \begin{proof}[Proof of Theorem \ref{main 2}]
Recall that $P_n$ is the projection mapping from $\VV^*$ into $\HH_n$ defined by \eqref{eq proj}.  For any $n\ge1$, consider the following stochastic differential equation on $\HH_n$:
\begin{eqnarray}\label{eq approx 2}
dX_t^{(n)}&=&P_n \Delta X_t^{(n)}dt+P_nf\left(X_t^{(n)}\right)dt+P_n\sigma\left(X_t^{(n)}\right) d\beta^{(n)}_t\nonumber\\
 & &+\int_{\mathbb{X}}P_n G\left(X_{t-}^{(n)},v\right)\widetilde N(dt,dv),
\end{eqnarray}
with initial condition $X^{(n)}_0=P_nx$.   According to \cite[Theorem 3.1]{ABW}, \eqref{eq approx 2} admits a unique strong solution $X^{(n)}$ satisfying that
\begin{align}\label{eq solut approx22}
   X_t^{(n)}=& P_nx+ \int_0^tP_n\Delta   X_s^{(n)}ds+\int_0^tP_nf\left(X_s^{(n)}\right)ds+\int_0^tP_n\sigma\left(X_s^{(n)}\right)d\beta_s^{(n)}\notag\\
  &+\int_0^t\int_{\mathbb X} P_nG\left(X_{s-}^{(n)},v\right)\widetilde{N}(ds,dv),  \ \ t\in[0,T].
 \end{align}
 Furthermore, using the same method in the proof of \eqref{eq solut integ}, we have
 \begin{align}\label{eq solut integ3}
\sup_{n\ge1} \mathbb E\left(\sup_{t\in[0,T]}\left\|X_t^{(n)}\right\|_{\HH_n}^6+ \int_0^T \left\|X_t^{(n)}\right\|_{\HH_n}^4\cdot\left\|X_t^{(n)}\right\|_{\VV_n}^2dt\right)
\le C\left(1+\mathbb E \|x\|_{\HH}^6\right).
\end{align}

According to \cite[Theorem 2.2]{Ma} and  Lemma \ref{lem:stability:weak},    Theorem \ref{main1} is established
once the following statements are proved:
   \begin{itemize}
  \item[(C1).] $\left\{X^{(n)}\right\}_{n\ge1}$ converges in distribution  to $X$ in $L^2([0,T];  \HH)$ as $n\rightarrow \infty$;
  \item[(C2).] $\left\{X^{(n)}_T\right\}_{n\ge1}$ converges in distribution to $\{X_T\}$ in $\HH$ as $n\rightarrow \infty$.
\end{itemize}
In the sequel, we will prove Conditions   (C1) and  (C2). The proof is complete.
\end{proof}

Let $(\mathbb U,\|\cdot\|_{\mathbb U})$ be a separable  metric space. Given $p>1$, $\alpha\in(0,1)$, let $W^{\alpha,p}([0,T];\mathbb U)$ be the Sobolev space of all $u\in L^p([0,T];\mathbb U)$ such that
$$
\int_0^T\int_0^T\frac{\|u(t)-u(s)\|^p_{\mathbb U}}{|t-s|^{1+\alpha p}}dtds<\infty,
$$
endowed with the norm
$$
\|u\|^p_{W^{\alpha,p}([0,T];\mathbb U)}=\int_0^T\|u(t)\|^p_{\mathbb U}dt+\int_0^T\int_0^T\frac{\|u(t)-u(s)\|^p_{\mathbb U}}{|t-s|^{1+\alpha p}}dtds.
$$

\begin{lemma}\label{Compact}\cite[Sect. 5, Ch. I]{Lions},
  \cite[Sect. 13.3]{Temam 1983}.
Let $\mathbb U\subset \mathbb  Y\subset \mathbb U^*$ be Banach spaces, $\mathbb U$ and $\mathbb U^*$ reflexive, with compact embedding of $\mathbb U$ in $\mathbb Y$.
For any  $p\in(1,\infty)$ and $\alpha\in(0,1)$, let
$
\Gamma=L^p([0,T];\mathbb U)\cap W^{\alpha,p}([0,T];\mathbb U^*)
$
endowed with the natural norm. Then the embedding of $\Gamma$ in $L^p([0,T];\mathbb Y)$ is compact.
\end{lemma}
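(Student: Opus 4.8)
The plan is to establish relative compactness in $L^p([0,T];\mathbb Y)$ by decoupling the two mechanisms at work: the compact embedding $\mathbb U\hookrightarrow\mathbb Y$ supplies compactness in the state variable, while the fractional regularity encoded in $W^{\alpha,p}([0,T];\mathbb U^*)$ supplies equicontinuity in time. The bridge between them is the interpolation (Ehrling--Lions) inequality: for every $\varepsilon>0$ there is a constant $C_\varepsilon>0$ such that
\[
\|v\|_{\mathbb Y}\le \varepsilon\|v\|_{\mathbb U}+C_\varepsilon\|v\|_{\mathbb U^*},\qquad v\in\mathbb U.
\]
I would prove this by contradiction: if it failed for some $\varepsilon_0$, one would find $v_n$ with $\|v_n\|_{\mathbb Y}=1$ but $\varepsilon_0\|v_n\|_{\mathbb U}+n\|v_n\|_{\mathbb U^*}<1$; the compact embedding $\mathbb U\hookrightarrow\mathbb Y$ forces a subsequence convergent in $\mathbb Y$, while $\|v_n\|_{\mathbb U^*}\to0$ pins the limit at $0$, contradicting $\|v_n\|_{\mathbb Y}=1$. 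Integrating this inequality in time over $[0,T]$ reduces the whole problem to showing that a bounded sequence in $\Gamma$ is relatively compact in $L^p([0,T];\mathbb U^*)$.

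Next I would take a bounded sequence $(u_n)\subset\Gamma$. Since $\mathbb U$ is reflexive and $1<p<\infty$, the Bochner space $L^p([0,T];\mathbb U)$ is reflexive, so after passing to a subsequence I may assume $u_n\rightharpoonup u$ weakly there; by weak lower semicontinuity the limit $u$ inherits the $W^{\alpha,p}([0,T];\mathbb U^*)$ bound, so writing $w_n:=u_n-u$ it suffices to prove that the weakly null, $\Gamma$-bounded sequence $(w_n)$ converges strongly to $0$ in $L^p([0,T];\mathbb U^*)$. To this end I introduce a time mollification $w_n^\delta:=\rho_\delta * w_n$ with a standard mollifier $\rho_\delta$. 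Substituting $s=t+\tau$ in the Gagliardo seminorm and using Jensen's inequality gives the key uniform estimate $\|w_n^\delta-w_n\|_{L^p([0,T];\mathbb U^*)}\le C\delta^{\alpha}$ with $C$ independent of $n$, which quantifies the temporal equicontinuity.

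It remains to treat, for each fixed $\delta>0$, the smoothed sequence. For fixed $t$ the average $w_n^\delta(t)=\int_0^T\rho_\delta(t-s)w_n(s)\,ds$ converges weakly to $0$ in $\mathbb U$ (test against $L^{p'}([0,T];\mathbb U^*)$ and use $w_n\rightharpoonup0$), and is uniformly bounded in $\mathbb U$ by $\|\rho_\delta\|_{L^{p'}}\|w_n\|_{L^p([0,T];\mathbb U)}$. Because $\mathbb U\hookrightarrow\mathbb Y$ is compact and $\mathbb Y\hookrightarrow\mathbb U^*$ is continuous, the embedding $\mathbb U\hookrightarrow\mathbb U^*$ is compact, so weak convergence in $\mathbb U$ upgrades to strong convergence $w_n^\delta(t)\to0$ in $\mathbb U^*$ for every $t$. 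Dominated convergence, the integrand being bounded by the $\delta$-dependent constant above on the finite interval $[0,T]$, then yields $\|w_n^\delta\|_{L^p([0,T];\mathbb U^*)}\to0$ as $n\to\infty$. Combining this with the mollification estimate, I first choose $\delta$ so that $C\delta^\alpha$ is small and then let $n\to\infty$, obtaining $w_n\to0$ in $L^p([0,T];\mathbb U^*)$; feeding this back into the interpolation inequality and letting $\varepsilon\to0$ gives $w_n\to0$ in $L^p([0,T];\mathbb Y)$, which is the desired sequential compactness.

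The main obstacle is the temporal step, namely converting weak convergence in $L^p([0,T];\mathbb U)$ into strong convergence in the larger space $L^p([0,T];\mathbb U^*)$. The delicate points are deriving the uniform translation estimate $\|w_n^\delta-w_n\|\le C\delta^\alpha$ directly from the seminorm defining $W^{\alpha,p}$, including the correct treatment of the endpoints where the convolution must be truncated or the function extended, and rigorously justifying the interchange of the $\delta\to0$ and $n\to\infty$ limits. Everything else, the interpolation inequality, the extraction of the weak limit, and the pointwise-in-time upgrade, is a routine consequence of reflexivity and of the compactness of $\mathbb U\hookrightarrow\mathbb U^*$.
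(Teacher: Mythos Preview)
The paper does not prove this lemma; it is quoted from \cite{Lions} and \cite{Temam 1983} as a classical compactness tool and used as a black box. Your sketch is precisely the standard argument one finds in those references (and in Simon's refinement): Ehrling's inequality reduces matters to strong convergence in $L^p([0,T];\mathbb U^*)$, and the Gagliardo seminorm, after the change of variable $s=t+\tau$, controls $\int |\tau|^{-1-\alpha p}\|u(\cdot+\tau)-u\|_{L^p(\mathbb U^*)}^p\,d\tau$, which via H\"older yields the mollification bound $\|w_n^\delta-w_n\|_{L^p(\mathbb U^*)}\le C\delta^{\alpha}[w_n]_{W^{\alpha,p}}$ you need. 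The remaining pointwise-in-$t$ upgrade and dominated-convergence step are routine. So your proposal is correct and coincides with the classical proof the paper defers to.
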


We first give a priori estimates for $X^{(n)}$.
\begin{lemma}\label{Lemma-estimation}  Under (H2)-(H5), we have

\begin{eqnarray}\label{estimation-01}
\sup_{n\ge1}
 \mathbb{E}\left[\sup_{0\leq t\leq T}\left\| X_t^{(n)}\right\|_{\HH}^{2}
 +\int_0^T\left\|X_t^{(n)}\right\|_{\VV}^{2}dt
 \right]
<\infty,
\end{eqnarray}
and for any $\alpha\in(0,1/2)$,
\begin{eqnarray}\label{estimation-02}
\sup_{n\ge1}\mathbb{E}\left[\left\|X^{(n)}\right\|_{W^{\alpha,2}([0,T],\VV^*)}\right]<\infty.
\end{eqnarray}

\end{lemma}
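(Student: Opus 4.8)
The plan is to estimate the two quantities in \eqref{estimation-01} and \eqref{estimation-02} separately, leaning throughout on the uniform sixth‑moment and weighted energy bound \eqref{eq solut integ3} already available for the Galerkin solutions. For \eqref{estimation-01} I would first observe that the $\sup$‑in‑time part is essentially free: since $\ee\sup_{t\le T}\|X_t^{(n)}\|_{\HH}^6$ is bounded uniformly in $n$ by \eqref{eq solut integ3}, Jensen's inequality gives $\sup_n\ee\sup_{t\le T}\|X_t^{(n)}\|_{\HH}^2<\infty$, and in particular $\sup_n\ee\int_0^T\|X_s^{(n)}\|_{\HH}^2\,ds<\infty$. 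To extract the $\VV$‑integral I would apply It\^o's formula to $\|X_t^{(n)}\|_{\HH}^2$ and take expectations (not the supremum), so that the Brownian and compensated‑Poisson martingales drop out. The $P_n\Delta$ term produces the coercive contribution $-2\ee\int_0^T\|X_s^{(n)}\|_{\VV}^2\,ds$, which I move to the left‑hand side; the cubic drift is handled by the identity $\langle x,f(x)\rangle_{\HH}=-\|x\|_{L^4}^4+C_1\|x\|_{\HH}^2\le C_1\|x\|_{\HH}^2$, i.e. the ``bad'' nonlinearity is in fact sign‑favourable and is simply discarded; and the two It\^o corrections are bounded by $\|P_n\sigma(x)\|_{\HS}^2\le C(1+\|x\|_{\HH}^2)$ (from (H2)) and $\int_{\mathbb X}\|P_nG(x,v)\|_{\HH}^2\vartheta(dv)\le C_G(1+\|x\|_{\HH}^2)$ (from (H3)). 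Rearranging gives $2\ee\int_0^T\|X_s^{(n)}\|_{\VV}^2\,ds\le\|x\|_{\HH}^2+(2C_1+C_G)\ee\int_0^T\|X_s^{(n)}\|_{\HH}^2\,ds+CT$, whose right‑hand side is uniformly bounded by the previous step; note that no Gr\"onwall argument is needed.

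For \eqref{estimation-02}, the $L^2([0,T];\VV^*)$ part of the norm follows at once from the continuous embedding $\HH\hookrightarrow\VV^*$ (precisely $\|x\|_{\VV^*}\le\lambda_1^{-1/2}\|x\|_{\HH}$) together with \eqref{estimation-01}. For the $W^{\alpha,2}$‑seminorm I would split the increment $X_t^{(n)}-X_s^{(n)}$ into its four constituents $J_1$ (the $P_n\Delta$ drift), $J_2$ (the reaction drift), $J_3$ (the Brownian integral) and $J_4$ (the compensated Poisson integral), and estimate each double integral $\ee\int_0^T\int_0^T\|J_i\|_{\VV^*}^2/|t-s|^{1+2\alpha}\,dt\,ds$ in turn. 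The drift pieces $J_1,J_2$ are of finite variation: using $\|P_n\Delta u\|_{\VV^*}\le\|\Delta u\|_{\VV^*}=\|u\|_{\VV}$ and the contractivity of $P_n$ on $\VV^*=\HH_{-1}$, each satisfies $\|J_i(s,t)\|_{\VV^*}^2\le|t-s|\int_0^T\|g_i(r)\|_{\VV^*}^2\,dr$ with $g_1=\Delta X^{(n)}$ and $g_2=f(X^{(n)})$, so that the double integral is controlled by $\big(\int_0^T\int_0^T|t-s|^{-2\alpha}\,dt\,ds\big)\,\ee\int_0^T\|g_i\|_{\VV^*}^2\,dr$, the time kernel being integrable precisely for $\alpha<1/2$.

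For the stochastic pieces $J_3,J_4$ I would use the It\^o and $L^2$ isometries in $\VV^*$, namely $\ee\|J_3(s,t)\|_{\VV^*}^2=\ee\int_s^t\|P_n\sigma(X_r^{(n)})\|_{\mathcal L_2(\HH;\VV^*)}^2\,dr$ and $\ee\|J_4(s,t)\|_{\VV^*}^2=\ee\int_s^t\int_{\mathbb X}\|P_nG(X_r^{(n)},v)\|_{\VV^*}^2\vartheta(dv)\,dr$. Since the integrands are bounded in expectation, again via (H2), (H3) and the $\HH$‑moment bound of the first step, one gets $\ee\|J_i(s,t)\|_{\VV^*}^2\le C|t-s|$, and the same time‑kernel computation closes the estimate for $\alpha<1/2$.

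The genuinely delicate point, and the step I expect to be the main obstacle, is the reaction contribution $J_2$, where I must bound $\ee\int_0^T\|f(X_r^{(n)})\|_{\VV^*}^2\,dr$ in spite of the cubic growth. Here I would exploit the one‑dimensional interpolation $\|x\|_{L^\infty}^2\le 2\|x\|_{\HH}\|x\|_{\VV}$ valid for $x\in\VV=\HH^1_0(0,1)$, which yields $\|x^3\|_{\VV^*}\le C\|x\|_{L^3}^3\le C\|x\|_{L^\infty}\|x\|_{L^2}^2\le C\|x\|_{\HH}^{5/2}\|x\|_{\VV}^{1/2}$, hence $\|x^3\|_{\VV^*}^2\le C\|x\|_{\HH}^{5}\|x\|_{\VV}$. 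The crucial observation is that the factorisation $\|x\|_{\HH}^{5}\|x\|_{\VV}=\|x\|_{\HH}^{3}\cdot\big(\|x\|_{\HH}^{4}\|x\|_{\VV}^{2}\big)^{1/2}$ combined with the Cauchy--Schwarz inequality gives
\[
\ee\int_0^T\|X_r^{(n)}\|_{\HH}^{5}\|X_r^{(n)}\|_{\VV}\,dr\le T^{1/2}\Big(\ee\sup_{r\le T}\|X_r^{(n)}\|_{\HH}^{6}\Big)^{1/2}\Big(\ee\int_0^T\|X_r^{(n)}\|_{\HH}^{4}\|X_r^{(n)}\|_{\VV}^{2}\,dr\Big)^{1/2},
\]
and both factors on the right are exactly the quantities bounded uniformly in $n$ by \eqref{eq solut integ3} (the linear part $C_1X_r^{(n)}$ being trivially controlled via $\HH\hookrightarrow\VV^*$ and \eqref{estimation-01}). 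Thus the weighted energy bound in \eqref{eq solut integ3} is tailor‑made for the cubic nonlinearity; once this is in place, assembling the contributions of $J_1,\dots,J_4$ produces \eqref{estimation-02}.
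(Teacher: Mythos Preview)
Your argument is correct and follows the same overall architecture as the paper: It\^o's formula in $\HH$ for \eqref{estimation-01}, and a term-by-term increment estimate for the $W^{\alpha,2}$-seminorm in \eqref{estimation-02}. The one substantive difference is your treatment of the cubic reaction piece. The paper estimates the increment of $\int P_nf(X^{(n)})$ directly in the $\HH$-norm, bounding $\|f(X_r^{(n)})\|_{\HH}$ by a quantity of order $1+\|X_r^{(n)}\|^{3}$ and closing via the sixth-moment part of \eqref{eq solut integ3} alone; this is quick but implicitly requires $L^6$ control (since $\|x^3\|_{L^2}=\|x\|_{L^6}^3$), a point the paper glosses over. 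You instead work in the weaker $\VV^*$-norm, use the one-dimensional Agmon inequality $\|x\|_{L^\infty}^2\le C\|x\|_{\HH}\|x\|_{\VV}$ to get $\|x^3\|_{\VV^*}^2\le C\|x\|_{\HH}^{5}\|x\|_{\VV}$, and then split the time integral by Cauchy--Schwarz so that \emph{both} pieces of \eqref{eq solut integ3} --- the sixth moment and the weighted energy $\ee\int\|X^{(n)}\|_{\HH}^4\|X^{(n)}\|_{\VV}^2\,dt$ --- are used. Your route is a bit longer but makes the estimate self-contained and transparently uniform in $n$; the paper's route is shorter but leans on the $L^6$-control hidden in the reference to \cite{BLZ}.
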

\begin{proof} Applying  It\^o's formula  with $p=2$ (instead of taking $p=\beta+2$) in the proof of \cite[Lemma 4.2]{BLZ}),  one can obtain the estimate \eqref{estimation-01}. The  details  are omitted here. Next, we  prove \eqref{estimation-02}.
Notice that
\begin{eqnarray}\label{X-epsilon-1}
 X_t^{(n)}
&=&
 P_nx+ \int_{0}^{t}P_n \Delta X_s^{(n)}ds+\int_{0}^{t}P_n f\left(X_s^{(n)}\right)ds\nonumber\\
 & &+
 \int_{0}^{t}P_n\sigma\left(X_s^{(n)}\right)d\beta^{(n)}_s+
 \int_{0}^{t}\int_{\mathbb{U}}P_nG\left(X^{(n)}_{s-},v\right) \widetilde N(ds,dv)\nonumber\\
 &=:& J_1^{(n)}+J_2^{(n)}(t)+ J_3^{(n)}(t)+J_4^{(n)}(t)+J_5^{(n)}(t).
\end{eqnarray}
By the same arguments as in the proof of Theorem 3.1 in \cite{Flandoli-Gatarek}, we have
\begin{eqnarray}\label{1J1-X}
\sup_{n\ge1}\mathbb{E}\left\|J_1^{(n)}\right\|_{\HH}^2<\infty,\ \
\sup_{n\ge1}\mathbb{E}\left\|J_2^{(n)}\right\|^2_{W^{1,2}([0,T];\VV^*)} <\infty.
\end{eqnarray}
Since for $t>s$,
\begin{eqnarray*}
 \mathbb{E}\left\|J_3^{(n)}(t)-J_3^{(n)}(s)\right\|_{\HH}^2
&=&
 \mathbb{E}\left\|\int_s^t P_n f\left(X^{(n)}_r\right)dr\right\|_{\HH}^2\nonumber\\
&\leq& C
 \mathbb{E}\left( \int_s^t\sqrt{1+\left\|X^{(n)}_r\right\|^6_{\HH}} dr\right)^2\nonumber\\
&\leq&
C \mathbb{E}\left(1+\sup_{r\in[0,T]}\left\|X^{(n)}_r\right\|^6_{\HH}\right)(t-s),
\end{eqnarray*}
we have
\begin{eqnarray}\label{1J4-1-X}
 \mathbb{E}\int_0^T\left\|J_3^{(n)}(t)\right\|^2_{\HH}dt
\leq
  C\mathbb{E}\left(1+\sup_{r\in[0,T]}\left\|X^{(n)}_r\right\|^6_{\HH}\right)T^2,
\end{eqnarray}
and
\begin{eqnarray}\label{1J4-2-X}
\mathbb{E}\int_0^T\int_0^T\frac{\left\|J_3^{(n)}(t)-J_3^{(n)}(s)\right\|^2_{\HH}}{|t-s|^{1+2\alpha}}dtds
\leq C(\alpha, T) \mathbb{E}\left(1+\sup_{r\in[0,T]}\left\|X^{(n)}_r\right\|^6_{\HH}\right).
\end{eqnarray}
By \eqref{eq solut integ3}, \eqref{1J4-1-X} and \eqref{1J4-2-X}, we obtain
\begin{eqnarray}\label{1J4-X}
\sup_{n\ge1}\mathbb{E}\left\|J_3^{(n)}\right\|^2_{W^{\alpha,2}([0,T];\VV^*)}<\infty.
\end{eqnarray}
Now for $J_4^{(n)}$, since
for $t>s$,
\begin{eqnarray*}
 \mathbb{E}\left\|J_4^{(n)}(t)-J_4^{(n)}(s)\right\|_{\HH}^2
&=&
 \mathbb{E}\left\|\int_s^t  P_n\sigma\left(X^{(n)}_r\right)d\beta^{(n)}_r\right\|_{\HH}^2\nonumber\\
 &\leq&
 C\mathbb{E}\left(\int_s^t\left\| \sigma\left(X^{(n)}_r\right)\right\|^2_{\HS}dr\right)\nonumber\\
&\leq&
 C C_{\sigma}^2 \mathbb{E}\left(\int_s^t \left(1+\left\|X^{(n)}_r\right\|^2_{\HH}\right)dr\right)\nonumber\\
 &\leq&
C C_{\sigma}^2 \mathbb{E}\left(1+\sup_{r\in[0,T]}\left\|X^{(n)}_r\right\|^2_{\HH}\right)(t-s)\nonumber,
\end{eqnarray*}
similarly  to (\ref{1J4-X}), we have
\begin{eqnarray}\label{1J5-X}
\sup_{n\ge1}\mathbb{E}\left\|J_4^{(n)}\right\|^2_{W^{\alpha,2}([0,T];\VV^*)}<\infty.
\end{eqnarray}
For $J_5^{(n)}$, we also have
\begin{eqnarray*}
 \mathbb{E}\left\|J_5^{(n)}(t)-J_5^{(n)}(s)\right\|_{\HH}^2
&=&
 \mathbb{E}\left\|\int_s^t \int_{\mathbb{X}}G\left(X^{(n)}_{r-},v\right)\widetilde N(dr,dv)\right\|_{\HH}^2\\
&\le& C
 \mathbb{E}\int_s^t \int_{\mathbb{X}}\left\|G\left(X^{(n)}_r,v\right)\right\|_{\HH}^2\vartheta(dv)dr\\
&\leq&  C C_G \mathbb{E}\left(1+\sup_{r\in[0,T]}\left\|X^{(n)}_r\right\|^2_{\HH}\right)(t-s).
\end{eqnarray*}
Similarly to (\ref{1J4-X}), we have
\begin{eqnarray}\label{1J6-X}
\sup_{n\ge1}\mathbb{E}\left\|J_5^{(n)}\right\|^2_{W^{\alpha,2}([0,T];\VV^*)}<\infty.
\end{eqnarray}
Putting above inequalities together, we  get (\ref{estimation-02}).
 The proof is complete.
\end{proof}

\begin{proposition}\label{prop:approx}  Under  (H2)-(H5), for any $T>0$,
\begin{itemize}
  \item[(a).] $\left\{X^{(n)}\right\}_{n\ge1}$ converges in distribution  to $X$ in $L^2([0,T];  \HH)$ as $n\rightarrow \infty$;
  \item[(b).] $\left\{X^{(n)}_T\right\}_{n\ge1}$ converges in distribution to $\{X_T \}$ in $\HH$ as $n\rightarrow \infty$.
\end{itemize}
\end{proposition}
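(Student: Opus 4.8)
The plan is to derive both convergences from \emph{tightness} of the laws of $\{X^{(n)}\}_{n\ge1}$, followed by an identification of every limit point as the unique solution $X$ of \eqref{SPDE2}; the pathwise uniqueness from \cite{BLZ} then forces the whole sequence (not just a subsequence) to converge. For part (a) I would first establish tightness of $\{\mathcal{L}(X^{(n)})\}_{n\ge1}$ on $L^2([0,T];\HH)$. Applying Lemma \ref{Compact} with $\mathbb U=\VV$, $\mathbb Y=\HH$, $\mathbb U^*=\VV^*$, $p=2$ and $\alpha\in(0,1/2)$, the space $\Gamma=L^2([0,T];\VV)\cap W^{\alpha,2}([0,T];\VV^*)$ embeds compactly into $L^2([0,T];\HH)$, the compact embedding $\VV\hookrightarrow\HH$ being the one recorded in Section 2. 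The a priori bounds \eqref{estimation-01} and \eqref{estimation-02} of Lemma \ref{Lemma-estimation} give $\sup_{n\ge1}\mathbb E[\|X^{(n)}\|_{\Gamma}]<\infty$, so by Chebyshev's inequality, for each $\varepsilon>0$ the ball $\{\|\cdot\|_{\Gamma}\le R_\varepsilon\}$ is a compact subset of $L^2([0,T];\HH)$ carrying mass at least $1-\varepsilon$ uniformly in $n$. Thus the laws are tight, and by Prokhorov's theorem relatively compact.

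To identify the limit, along any subsequence I would extract a further subsequence and invoke the Skorokhod representation theorem to obtain, on a common probability space, copies $\tilde X^{(n)}$ (together with reconstructed driving noises) with $\tilde X^{(n)}\stackrel{d}{=}X^{(n)}$ and $\tilde X^{(n)}\to\tilde X$ almost surely in $L^2([0,T];\HH)$. The uniform $L^2([0,T];\VV)$ bound forces $\tilde X\in L^2([0,T];\VV)$ with $\tilde X^{(n)}\rightharpoonup\tilde X$ weakly there (the weak-$\VV$ and strong-$\HH$ limits coincide), which lets the linear term $P_n\Delta X^{(n)}$ pass to $\Delta\tilde X$ in the variational formulation, while the stochastic-integral terms are handled by standard martingale-convergence arguments with $P_n\to I$ strongly.

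The crux of (a) is the non-Lipschitz reaction term $P_nf(X^{(n)})=P_n(-(X^{(n)})^3+C_1X^{(n)})$. From almost sure $L^2([0,T];\HH)$ convergence I extract a.e.-$(t,\xi,\omega)$ convergence of $\tilde X^{(n)}$ to $\tilde X$, whence $f(\tilde X^{(n)})\to f(\tilde X)$ a.e. To upgrade this to convergence in a space dual to the test functions I use uniform integrability supplied by the mixed moment bound $\mathbb E\int_0^T\|X^{(n)}_t\|_{\HH}^4\,\|X^{(n)}_t\|_{\VV}^2\,dt\le C$ from \eqref{eq solut integ3}: via the one-dimensional Agmon inequality $\|u\|_{L^\infty(0,1)}^2\le C\|u\|_{\HH}\|u\|_{\VV}$ one gets $\|u\|_{L^6}^6\le C\|u\|_{\HH}^4\|u\|_{\VV}^2$, so that $f(X^{(n)})$ is bounded in $L^2(\Omega\times[0,T];\HH)$ uniformly in $n$; if a purely analytic passage is insufficient, the monotonicity of $x\mapsto -x^3$ allows a Minty-type argument. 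This shows $\tilde X$ solves \eqref{SPDE2}; by uniqueness every subsequential limit has the law of $X$, and the subsequence principle for weak convergence yields (a).

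For part (b) the obstacle is that evaluation at the fixed time $T$ is not continuous on $L^2([0,T];\HH)$, so (a) does not by itself deliver convergence of the time-$T$ marginals in the \emph{strong} $\HH$-topology demanded by Lemma \ref{lem:stability:weak}. I would therefore prove separately that $\{\mathcal{L}(X^{(n)}_T)\}_{n\ge1}$ is tight on $\HH$, using the mild (variation-of-constants) representation of $X^{(n)}_T$: the analytic smoothing $\|e^{tP_n\Delta}\|_{\HH\to\HH_\theta}\lesssim t^{-\theta}$, combined with the moment bounds \eqref{estimation-01} and \eqref{eq solut integ3} and the factorization method for the two stochastic convolutions, gives $\sup_{n\ge1}\mathbb E\|X^{(n)}_T\|_{\HH_\theta}<\infty$ for some small $\theta>0$; since $\HH_\theta$ embeds compactly in $\HH$, strong tightness in $\HH$ follows by Chebyshev's inequality. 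Combining this tightness with part (a) (which yields $\tilde X^{(n)}_t\to\tilde X_t$ in $\HH$ for a.e. $t$, together with the c\`adl\`ag regularity of $X$ from \cite{BLZ}) and again invoking uniqueness identifies every limit point as $\mathcal{L}(X_T)$, proving $X^{(n)}_T\to X_T$ in distribution on $\HH$. I expect the two genuinely hard points to be exactly these: passing to the limit in the cubic term in (a), and producing the fixed-time $\HH_\theta$-regularity that upgrades the marginal convergence to the strong $\HH$-topology in (b).
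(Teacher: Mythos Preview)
Your plan for part (a) coincides with the paper's: tightness in $L^2([0,T];\HH)$ via Lemma~\ref{Compact} and the bounds of Lemma~\ref{Lemma-estimation}, followed by identification of the limit through the uniqueness result of \cite{BLZ} and the subsequence principle. The paper simply cites \cite[Theorem~4.1]{BLZ} for the identification step rather than spelling out the Minty/monotonicity argument you sketch, but the strategy is the same.

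For part (b), however, the paper takes a genuinely different and more direct route. It does \emph{not} prove $\HH_\theta$-tightness of $X^{(n)}_T$. Instead, after applying a generalized Skorokhod representation to the $L^2([0,T];\HH)$ convergence of (a), it shows directly that $\bar X^{(n)}_T\to\bar X_T$ in $\HH$ in probability by comparing the mild formulas \eqref{eq:mild solution1} and \eqref{eq:mild solution2} term by term on the truncation sets $\bar\Omega_{n,M}=\{\sup_{t\le T}\|\bar X^{(n)}_t\|_\HH\vee\|\bar X_t\|_\HH\le M\}$. The cubic term is handled not by uniform integrability or Minty but by the quantitative estimate (from \cite{Xu})
\[
\|f(x)-f(y)\|_{\HH}\le C\bigl(1+\|x\|_{\HH}\|x\|_{\VV}+\|y\|_{\HH}\|y\|_{\VV}\bigr)\|x-y\|_{\HH},
\]
which, combined with the mixed moment bound \eqref{eq solut integ3}, converts the strong $L^2([0,T];\HH)$ convergence of $\bar X^{(n)}$ directly into convergence of the reaction integral at time $T$. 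This buys the paper a cleaner argument with no need for the factorization method (which is delicate for the Poisson convolution) or for any extra spatial regularity.

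Your approach to (b) is plausible, but note that your identification step is not complete as stated: tightness of $X^{(n)}_T$ in $\HH$ together with $\tilde X^{(n)}_t\to\tilde X_t$ for a.e.\ $t$ and c\`adl\`ag regularity of the limit does \emph{not} by itself pin down the limit of $X^{(n)}_T$ as $X_T$; you would still need either a uniform-in-$n$ temporal modulus at $t=T$ or a passage to the limit in the mild equation at time $T$---and the latter is precisely the paper's computation. So your $\HH_\theta$-tightness would ultimately be redundant.
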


\begin{proof}
 (a). For any subsequence $\{X^{(n_k)}\}_{k\ge1}\subset \{X^{(n)}\}_{n\ge1}$,
 by Lemma \ref{Compact} and  Lemma \ref{Lemma-estimation},   we know that  $\{X^{(n_k)}\}_{k\ge1}$ is tight in the space $L^2([0,T]; \HH)$. Hence, there exists a subsequence $\{X^{(n'_k)}\}_{k\ge1}\subset \{X^{(n_k)}\}_{k\ge1}$,  which converges in distribution   as random variables in the space $L^2([0,T]; \HH)$.
By  the uniqueness of the limit (see the proof of Theorem 4.1 in \cite{BLZ}) and the arbitrariness of the  subsequence $\left\{X^{(n_k)}\right\}_{k\ge1}$,  we know that $\left\{X^{(n)}\right\}_{n\ge1}$ converges in distribution  to $X$ in $L^2([0,T];  \HH)$ as $n\rightarrow \infty$.

(b). Recall that $\{S(t)\}_{t\ge0}$ is the analytic semigroup associated with $\Delta$. Let  $S^{(n)}(t)=P_nS(t)$. According to  \cite[Chapter 9.3]{PZ}, the  solution  $\{X_t\}_{t\ge0}$ to  \eqref{SPDE2} is equivalent to the following form:
\begin{align}\label{eq:mild solution1}
 X_t=&S(t)x+\int_0^tS(t-s)f(X_s)ds+\int_0^tS(t-s)\sigma(X_s)d\beta_s\notag\\
 &+\int_0^t\int_{\mathbb X} S(t-s)G(X_{s-},v)\widetilde N(ds,dv),
\end{align}
and the solution $\left\{X^{(n)}_t\right\}_{t\ge0}$ to   \eqref{eq approx 2} is  equivalent to following form:
\begin{align}\label{eq:mild solution2}
 X^{(n)}_t=&S^{(n)}(t)P_nx+\int_0^tS^{(n)}(t-s)f\left(X^{(n)}_s\right)ds+\int_0^tS^{(n)}(t-s)\sigma\left(X^{(n)}_s\right)P_nd\beta_s\notag\\
 &+\int_0^t\int_{\mathbb X} S^{(n)}(t-s)P_nG\left(X^{(n)}_{s-},v\right)\widetilde N(ds,dv).
\end{align}

 Applying  a generalized version of the Skorokhod representation theorem (e.g., see \cite[Theorem C.1]{BHR}), there exist a stochastic basis $(\bar {\Omega},  \bar{\mathcal F}, (\bar{\mathcal F_t})_{t\ge0}, \bar {\pp})$ and  the random variables $$\left\{\left(\bar x^{(n)}, {\bar X^{(n)}}, \bar x, \bar {X}, \bar {\beta}, \bar {N}\right)\right\}_{n\ge1}$$  on this basis satisfying  that $\left(\bar x^{(n)},\bar {X}^{(n)}, \bar x, \bar {X}, \bar {\beta}, \bar {N}\right)$  has the same law as $\left(x^{(n)}, X^{(n)}, x, X, \beta, N\right)$ for any $n\ge1$, $\bar x^{(n)}\rightarrow \bar x$ in $\HH$, $\bar {\mathbb P}$-a.s.,  and  $\bar {X}^{(n)}\rightarrow \bar {X}$ in $L^2([0,T]; \HH)$, $\bar {\mathbb P}$-a.s.. Next,  we prove that $\bar {X}^{(n)}_T$ converges to $\bar {X}_T$  in probability under $\bar {\mathbb P}$.

For any $n\ge1, M>0$, let
 \begin{align}\label{eq Omega 1}
\bar {\Omega}_{n,M}=\left\{\bar {\omega};\   \sup_{t\in[0,T]} \left\|\bar {X}^{(n)}_t(\bar {\omega})\right\|_{\HH}\vee \left\|\bar {X}_t(\bar{\omega})\right\|_{\HH} \le M\right\}.
\end{align}
Then by \eqref{eq solut integ}, \eqref{estimation-01} and Fatou's lemma, we know that
 \begin{align}\label{eq Omega 2}
\lim_{M\rightarrow\infty}\sup_{n\ge1} \bar {\mathbb P}\left(\bar {\Omega}_{n,M}^c\right)=0,
\end{align}
and for any $M>0$, by the dominated  convergence theorem, we have
 \begin{align}\label{eq Omega 3}
\lim_{n\rightarrow\infty} \mathbb E^{\bar {\mathbb P}} \left(\int_0^T \left\|\bar X_t-\bar X_t^{(n)} \right\|_{\HH}^2dt\cdot 1_{\bar {\Omega}_{n,M}}\right)
=0.
\end{align}
Next, we will prove that
 \begin{align}\label{eq Omega 4}
\lim_{n\rightarrow\infty} \mathbb E^{\bar {\mathbb P}}  \left( \left\|\bar X_T-\bar X_T^{(n)} \right\|_{\HH}^2\cdot 1_{\bar {\Omega}_{n,M}}\right)=0.
\end{align}
This, together with \eqref{eq Omega 2}, implies (b).

By \eqref{eq:mild solution1}  and \eqref{eq:mild solution2}, we have for any $t\in[0,T]$,
\begin{align}\label{eq:mild solution3}
&\left\|\bar X_t-\bar X^{(n)}_t\right\|_{\HH}\notag\\
\le & \left\|S(t)\bar x- S^{(n)}(t)P_n\bar x\right\|_{\HH}\notag\\
&+\left\|\int_0^t\left[S(t-s)f(\bar X_s)-S^{(n)}(t-s)f\left(\bar X^{(n)}_s\right)\right]ds\right\|_{\HH}\notag\\
&+\left\|\int_0^t\left[S(t-s)\sigma\left(\bar X_s\right)-S^{(n)}(t-s)\sigma\left(\bar X^{(n)}_s\right)\right]d\beta_s\right\|_{\HH}\notag\\
 &+\left\|\int_0^t\int_{\mathbb X} \left[S(t-s)G(\bar X_{s-},v)- S^{(n)}(t-s)P_nG\left(\bar X^{(n)}_{s-},v\right)\right]\widetilde {\bar N}(ds,dv)\right\|_{\HH}\notag\\
 =:& J_{1,n}(t)+J_{2,n}(t)+J_{3,n}(t)+J_{4,n}(t).
\end{align}
By the dominated  convergence theorem, we can prove that  for $k=1,\cdots, 4, t\in[0,T]$,
\begin{align}\label{eq Omega 4}
\lim_{n\rightarrow\infty} \mathbb E^{\bar {\mathbb P}}\left[ J_{k,n}(t)\cdot 1_{\bar {\Omega}_{n,M}}\right]=0.
\end{align}
Here,  we will only prove \eqref{eq Omega 4} for $k=2$ and the other term can be proved similarly but more easily.
 Notice that
\begin{align}\label{eq:mild solution3}
 \mathbb E^{\bar{\mathbb P}}\left[ J_{2,n}(t)\cdot 1_{\bar {\Omega}_{n,M}}\right]
\le & \mathbb E^{\bar{\mathbb P}}\left\|\int_0^t(I-P_n)S(t-s) f\left(\bar X_s\right)ds\cdot 1_{\bar {\Omega}_{n,M}}\right\|_{\HH}\notag\\
&+ \mathbb E^{\bar{\mathbb P}}\left\|\int_0^tS^{(n)}(t-s)\left[f(\bar X_s)-f\left(\bar X^{(n)}_s\right)\right]ds\cdot 1_{\bar {\Omega}_{n,M}}\right\|_{\HH}.
 \end{align}
By (2.10) in \cite{Xu} and the  Sobolev embedding theorem, we have
 \begin{align}
\|f(x)\|_{\HH}\le C\left(1+\|x\|_{\HH_{1/6}}^3\right)\le C(1+\|x\|_{\HH}^2\cdot \|x\|_{\VV}).
\end{align}
Since $P_n\rightarrow I$ as $n\rightarrow\infty$, by \eqref{eq solut integ} and the dominated  convergence theorem, we have
 \begin{align}
 & \mathbb E^{\bar{\mathbb P}}\int_0^t\left\|(I-P_n)S(t-s)f\left(\bar X_s\right)\right\|_{\HH}ds\notag\\
 \le & C \mathbb E^{\bar{\mathbb P}}\int_0^t\left\|(I-P_n)\right\|\cdot   \left(1+\|\bar X_s\|_{\HH}^2\cdot \|\bar X_s\|_{\VV}\right)ds
  \longrightarrow 0, \ \ \ \text{as}\ \ \  n\rightarrow\infty.
 \end{align}

By (2.8) in \cite{Xu} and the  Sobolev embedding theorem, we have
 \begin{align}
\|f(x)-f(y)\|_{\HH}\le & C \left(1+\|x\|_{\HH_{1/4}}^2+\|y\|_{\HH_{1/4}}^2 \right) \|x-y\|_{\HH}\notag\\
\le & C\left(1+\|x\|_{\HH}\cdot\|x\|_{\VV}+\|y\|_{\HH}\cdot\|x\|_{\VV} \right) \|x-y\|_{\HH}.
\end{align}
Then,  by \eqref{eq solut integ}, \eqref{eq solut integ3} and \eqref{eq Omega 3}, we have
 \begin{align}
&\mathbb E\left\|\int_0^tS^{(n)}(t-s)\left[f(\bar X_s)-f\left(\bar X^{(n)}_s\right)\right]ds\cdot 1_{\bar {\Omega}_{n,M}}\right\|_{\HH}\notag\\
\le & C \mathbb E\left(\int_0^t \left(1+\left\|\bar X_s\right\|_\HH\cdot\|\bar X_s\|_{\VV}+\|\bar X_s^{(n)}\|_\HH\cdot\|\bar X_s^{(n)}\|_{\VV} \right) \|\bar X_s-\bar X_s^{(n)}\|_{\HH}ds\cdot 1_{\bar {\Omega}_{n,M}}\right)\notag\\
\le & C  \left[\mathbb E\int_0^t \left(1+\left\|\bar X_s\right\|_\HH\cdot\|\bar X_s\|_{\VV}+\|\bar X_s^{(n)}\|_\HH\cdot \|\bar X_s^{(n)}\|_{\VV} \right)^2\cdot 1_{\bar {\Omega}_{n,M}}ds\right]^{\frac12}\notag\\
&\ \ \cdot  \left[\mathbb E\int_0^t\left\|\bar X_s-\bar X_s^{(n)}\right\|_{\HH}^2ds\cdot 1_{\bar {\Omega}_{n,M}}\right]^{\frac12}\notag\\
&\longrightarrow 0, \ \ \text{as } n\rightarrow\infty.
 \end{align}

The proof is complete.
\end{proof}\

\noindent{\bf Acknowledgments} We sincerely thank the references  for  helpful comments and remarks.

\end{document}